\newtheorem{theorem}{Theorem}[section]
\newtheorem{lemma}[theorem]{Lemma}
\newtheorem{definition}[theorem]{Definition}
\newtheorem{remark}[theorem]{Remark}
\newtheorem{corollary}[theorem]{Corollary}
\newtheorem{proposition}[theorem]{Proposition}
\newtheorem{ex}[theorem]{Example}
\def\F{\mathcal{F}}
\def\M{\mathfrak{m}}
\def\N{\mathfrak{n}}
\begin{document}
\title{\bf Recursive Betti numbers for Cohen-Macaulay $d$-partite clutters arising from posets}
\author{Davide Bolognini}
\begin{abstract}
\noindent A natural extension of bipartite graphs are $d$-partite clutters, where $d \geq 2$ is an integer. For a poset $P$, Ene, Herzog and Mohammadi introduced the $d$-partite clutter $\mathcal{C}_{P,d}$ of multichains of length $d$ in $P$, showing that it is Cohen-Macaulay. We prove that the cover ideal of $\mathcal{C}_{P,d}$ admits an $x_i$-splitting, determining a recursive formula for its Betti numbers and generalizing a result of Francisco, H\`a and Van Tuyl on the cover ideal of Cohen-Macaulay bipartite graphs. Moreover we prove a Betti splitting result for the Alexander dual of a Cohen-Macaulay simplicial complex. 
\end{abstract}
\maketitle
{\bf Keywords}: resolution of edge ideals, Cohen-Macaulay clutters, Betti splittings, posets.\\ 
{\it AMS Mathematics Subject Classification 2010}: 13D02, 13A02, 05E40, 05E45.

\section{Introduction}
Edge ideals of graphs have been extensively studied by several authors (see for instance \cite{osc}, \cite{scm}, \cite{tuyl2}, \cite{tuyl3}, \cite{van}). In recent years the interest focused on a generalization of the notion of graph, the so-called {\em clutter} (see \cite{faridi2}, \cite{morey}, \cite{morey2}, \cite{hamorvill}, \cite{tuyl2}, \cite{wood}). 

Edge ideals have been introduced by Villarreal in \cite{vill}. Let $\mathbbm{k}$ be a field and $G$ a finite simple graph on $n$ vertices. The edge ideal of $G$ is $$I(G):=(x_ix_j:\{i,j\} \text{   is an edge of   }G) \subseteq S=\mathbbm{k}[x_1,...,x_n].$$ A graph $G$ is called Cohen-Macaulay if $S/I(G)$ is Cohen-Macaulay. Similarly it is possible to define the edge ideal $I(\mathcal{C})$ of a clutter $\mathcal{C}$.


A relevant class of graphs consists of bipartite graphs. A finite simple graph $G$ is bipartite if there exists a partition of its vertex set $V=V_1 \mathcal{t} V_2$, such that every edge of $G$ is of the form $\{i,j\}$, with $i \in V_1$ and $j \in V_2$. For every $d \geq 2$, $d$-partite clutters are a natural extension of bipartite graphs. They have been introduced by Stanley in \cite{stanley2}. There are also other possible extensions, see e.g. \cite{faridi2}.

In \cite{hh2}, Herzog and Hibi characterized all Cohen-Macaulay bipartite graphs $G$, showing that a bipartite graph is Cohen-Macaulay if and only if it can be associated to a poset $P$ under a suitable construction. In \cite{ehm}, Ene, Herzog and Mohammadi extended this construction introducing the clutter $\mathcal{C}_{P,d}$, for $d \geq 2$, and describing the maps of the resolution of the Alexander dual ideal $I(\mathcal{C}_{P,d})^*$ of $I(\mathcal{C}_{P,d})$, the so-called {\em cover ideal}. 


By using Betti splitting techniques introduced in \cite{ur}, in Theorem \ref{cmdip} we prove an explicit recursive formula for the graded Betti numbers of $I(\mathcal{C}_{P,d})^*$, extending a result by Francisco,  H\`a and Van Tuyl \cite[Theorem 3.8]{ur}. There are several preliminary results to the proof of the main theorem, but we think that Proposition \ref{splitinside} and Proposition \ref{final1} could be results of independent interest. In the case of a poset $P$ with a unique maximal element, the formula can be considerably simplified (see Corollary \ref{simpler}). In Corollary \ref{cmbip} we give another formulation of \cite[Theorem 3.8]{ur}, on Betti numbers of the cover ideal of a Cohen-Macaulay bipartite graph.

In Example \ref{dunce} we show that Betti splittings could {\em not exist} for the Alexander dual ideal $I_{\Delta}^*$ of a simplicial complex $\Delta$. Moreover we point out that there is no relation between the existence of a Betti splitting of the Stanley-Reisner ideal $I_{\Delta}$ and the existence of a Betti splitting of the Alexander dual ideal $I_{\Delta}^*$. Finally we give a sufficient condition that ensures the existence of an $x_i$-splitting for $I_{\Delta}^*$ (see Proposition \ref{weakVD}). \\


{\em Acknowledgements.} The last section of this paper was inspired by the summer school {\em Discrete Morse Theory and Commutative Algebra}, held at the Institut Mittag-Leffler in Stockholm in 2012. The author is grateful to the staff of the Institut, to the organizers Bruno Benedetti and Alexander Engstr\"{o}m, for their suggestions on Discrete Morse Theory and properties of simplicial complexes. Thanks to Paolo Sentinelli for helpful discussions. 

\section{Preliminaries}\label{pre}
Let $\mathbbm{k}$ be a field. Throughout this paper, $S$ denotes the polynomial ring $\mathbbm{k}[x_1,...,x_n]$. For a monomial ideal $I \subseteq S$, let $\beta_{i,j}(I)=\mathrm{dim}_{\mathbbm{k}}\mathrm{Tor}_i(I,\mathbbm{k})_j$ be the {\em graded Betti numbers of $I$} and $\beta_i(I)=\sum_{j \in \mathbb{N}}\beta_{i,j}(I)$ be the {\em i-th total Betti numbers of $I$}. If $I$ is generated in degree $d$, we say that $I$ has a {\em $d$-linear resolution} provided $\beta_{i,i+j}(I)=0$ for every $i \in \mathbb{N}$ and $j \neq d.$ When the context is clear, we simply say that $I$ has a linear resolution. 

Denote by $\mathrm{supp}(m)$ the set of variables dividing a monomial $m$ and by $G(I)$ the set of minimal monomial generators of $I.$ If $I$ is a square-free monomial ideal, let  $I^{*}$ be its Alexander dual ideal (see for instance \cite{hibi}).\\

The idea of Betti splitting has been introduced in \cite{eiker}, but in this paper we follow the more general setting of \cite{ur}.

\begin{definition}{\em (\cite[Definition 1.1]{ur})}
\em Let $I$, $J$ and $K$ be monomial ideals such that $I=J+K$ and $G(I)$ be the disjoint union of $G(J)$ and $G(K)$. Then $J+K$ is a {\em Betti splitting} of $I$ if $$\beta_{i,j}(I)=\beta_{i,j}(J)+\beta_{i,j}(K)+\beta_{i-1,j}(J \cap K), \text{ for every } i,j \in \mathbb{N}.$$ If this is the case $\beta_{i}(I)=\beta_{i}(J)+\beta_{i}(K)+\beta_{i-1}(J \cap K), \text{ for every } i \in \mathbb{N}.$
\end{definition} 

In some cases we focus on a special splitting of a monomial ideal $I$. Let $x_i$ a fixed variable of $S$. Define $M:=\{m \in G(I):x_i \text{   divides   }m\}$, $J:=(\frac{m}{x_i}:m \in M)$ and let $K$ be the ideal generated by the remaining monomials of $G(I)$. We call $I=x_iJ+K$ an {\em $x_i$-partition}. If this is a Betti splitting, we say that $I=x_iJ+K$ is an {\em $x_i$-splitting} of $I$.

Francisco, H\`a and Van Tuyl, in \cite{ur}, proved the following splitting results that we use extensively in this paper.

\begin{proposition}\label{urur}{\em (\cite[Corollary 2.4]{ur},\cite[Corollary 2.7]{ur})}
Let $I$, $J$ and $K$ be monomial ideals in $S$ such that $I=J+K$ and $G(I)$ be the disjoint union of $G(J)$ and $G(K)$. Let $x_i$ be a variable of $S$. Then
\begin{itemize}
\item[{\em (i)}] If $J$ and $K$ have a linear resolution, then $I=J+K$ is a Betti splitting of $I$. 
\item[{\em (ii)}] If $I=x_iJ+K$ is an $x_i$-partition and $J$ has a linear resolution, then $I=J+K$ is a Betti splitting of $I$.
\end{itemize}
\end{proposition}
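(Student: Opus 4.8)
The plan is to reduce both statements to one splitting criterion coming from the long exact sequence in $\mathrm{Tor}$ attached to the short exact sequence of $S$-modules
\[
0 \longrightarrow J\cap K \xrightarrow{\ \alpha\ } J\oplus K \xrightarrow{\ \beta\ } I \longrightarrow 0,\qquad \alpha(u)=(u,u),\quad \beta(a,b)=a-b,
\]
which is exact precisely because $I=J+K$ and $J\cap K=\ker\beta$. Tensoring with $\mathbbm{k}$ over $S$ and passing to homology yields, in each homological degree, the graded exact sequence
\[
\cdots \to \mathrm{Tor}_i(J\cap K) \xrightarrow{\ \alpha_*\ } \mathrm{Tor}_i(J)\oplus\mathrm{Tor}_i(K) \xrightarrow{\ \beta_*\ } \mathrm{Tor}_i(I) \xrightarrow{\ \delta\ } \mathrm{Tor}_{i-1}(J\cap K) \to \cdots
\]
(coefficients in $\mathbbm{k}$). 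Comparing alternating dimensions in this sequence, degree by degree, shows that the equality $\beta_{i,j}(I)=\beta_{i,j}(J)+\beta_{i,j}(K)+\beta_{i-1,j}(J\cap K)$ holds for all $i,j$ \emph{if and only if} $\alpha_*$ vanishes in every homological degree; and since the two components of $\alpha_*$ are the maps induced by the inclusions $J\cap K\hookrightarrow J$ and $J\cap K\hookrightarrow K$, this reads: $I=J+K$ is a Betti splitting iff $\mathrm{Tor}_i(J\cap K)\to\mathrm{Tor}_i(J)$ and $\mathrm{Tor}_i(J\cap K)\to\mathrm{Tor}_i(K)$ are zero for every $i$ (this is \cite[Proposition 2.1]{ur}). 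I expect this bookkeeping to be the one genuinely delicate point: it is easy to land instead on the condition that the connecting map $\delta$ vanishes, which gives the wrong formula $\beta_{i,j}(I)=\beta_{i,j}(J)+\beta_{i,j}(K)-\beta_{i,j}(J\cap K)$.

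For \emph{(i)}, note that $G(J)\sqcup G(K)=G(I)$ is an antichain under divisibility, so no generator of $J$ divides one of $K$ and conversely. Hence every minimal generator $m$ of $J\cap K$ is divisible by $\mathrm{lcm}(u,v)$ for some $u\in G(J)$, $v\in G(K)$ with $u\nmid v$ and $v\nmid u$, so $\deg m>\deg u$ and $\deg m>\deg v$; therefore $\mathrm{Tor}_i(J\cap K)_\ell\ne 0$ forces $\ell> i+d_J$, where $d_J$ is the degree in which $J$ is generated. Since $J$ has a $d_J$-linear resolution, $\mathrm{Tor}_i(J)_\ell\ne 0$ forces $\ell=i+d_J$; the two ranges are disjoint, so $\mathrm{Tor}_i(J\cap K)\to\mathrm{Tor}_i(J)$ is zero in all degrees, and symmetrically for $K$. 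The criterion then gives the Betti splitting.

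For \emph{(ii)}, set $\widetilde J:=x_iJ$; multiplication by $x_i$ gives $\widetilde J\cong J(-1)$, so $\widetilde J$ has a $(d_J+1)$-linear resolution. The crucial elementary identity is $L:=\widetilde J\cap K=x_i\,(J\cap K)$: the inclusion $\supseteq$ is immediate, and for $\subseteq$, if $n\in\widetilde J\cap K$ then $n=x_im$ with $m\in J$ and $x_im\in K$, and since no minimal generator of $K$ is divisible by $x_i$ (this is where the $x_i$-partition hypothesis enters) this forces $m\in K$. Consequently $L\hookrightarrow K$ factors as $x_i(J\cap K)\hookrightarrow J\cap K\hookrightarrow K$, and the first map, under $L\cong(J\cap K)(-1)$, is multiplication by $x_i$; as $\mathfrak m$ annihilates $\mathrm{Tor}_*(-,\mathbbm{k})$, it induces the zero map on $\mathrm{Tor}$, hence so does $L\hookrightarrow K$. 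Likewise $L\hookrightarrow\widetilde J$ is, after the same shift, the inclusion $J\cap K\hookrightarrow J$, which induces the zero map on $\mathrm{Tor}$ by the degree count of part (i). Both components of $\alpha_*$ vanish, so the criterion applies and $I=x_iJ+K$ is a Betti splitting. Thus, once the criterion of the first paragraph is set up correctly, (i) is a pure $\mathrm{Tor}$-degree argument and (ii) reduces to (i) plus the identity $\widetilde J\cap K=x_i(J\cap K)$.
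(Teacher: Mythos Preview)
The paper does not give its own proof of this proposition: it is quoted verbatim from \cite[Corollaries 2.4 and 2.7]{ur}, with no argument supplied beyond the citation. So there is no in-paper proof to compare against.

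That said, your argument is correct and is essentially the one in \cite{ur}. You correctly identify the key criterion (\cite[Proposition 2.1]{ur}): $I=J+K$ is a Betti splitting iff both maps $\mathrm{Tor}_i(J\cap K,\mathbbm{k})\to\mathrm{Tor}_i(J,\mathbbm{k})$ and $\mathrm{Tor}_i(J\cap K,\mathbbm{k})\to\mathrm{Tor}_i(K,\mathbbm{k})$ vanish for all $i$. For (i), your degree-disjointness argument is exactly the intended one; the observation that no $u\in G(J)$ divides any $v\in G(K)$ (and conversely) forces $J\cap K$ to be generated in degrees strictly above $d_J$ and $d_K$, so the $\mathrm{Tor}$-supports cannot meet. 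For (ii), your identity $\widetilde J\cap K=x_i(J\cap K)$ is correct (your proof of the inclusion $\subseteq$ uses precisely that $x_i$ divides no element of $G(K)$), and the factorization of $L\hookrightarrow K$ through multiplication by $x_i$ kills the map on $\mathrm{Tor}$ since $\mathfrak m\cdot\mathrm{Tor}_*(-,\mathbbm{k})=0$. The remaining map $L\hookrightarrow\widetilde J$ becomes, after the shift, $J\cap K\hookrightarrow J$; here one should note explicitly why $J\cap K$ is generated in degrees $>d_J$ even though $K$ need not be equigenerated: if a minimal generator $m$ of $J\cap K$ had degree $d_J$ it would equal some $u\in G(J)$, whence some $v\in G(K)$ divides $u$, contradicting that $x_iu\in G(I)$ is a minimal generator. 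With that clarification your reduction to the degree count of part (i) goes through.
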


Since we are using properties of ideals with linear quotients, we recall here some useful facts. Ideals with linear quotients have been introduced by Herzog and Takayama in \cite{hhh}. For further details see \cite{hibi} and \cite{jz}.

\begin{definition}
\em Let $I \subseteq S$ be a monomial ideal. We say that the ideal $I$ has {\em linear quotients} if there exists an order $u_1,...,u_m$ of the monomials of $G(I)$ such that, for every $2 \leq j \leq k$, the colon ideal $(u_1,...,u_{j-1}):u_j$ are generated by a subset of $\{x_1,...,x_n\}$.
\end{definition}

An ideal with linear quotients is componentwise linear (see \cite[Corollary 2.8]{jz}) and, if generated in a single degree, it has a linear resolution (see \cite{hibi2}).

The following characterization of square-free monomial ideals with linear quotients will be useful later (see for instance \cite[Corollary 8.2.4]{hibi}).

\begin{proposition}\label{linquot}
Let $I \subseteq S$ be a square-free monomial ideal. Then $I$ has linear quotients with respect to the order $u_1,u_2,...,u_m$ of the monomial generators if and only if for each $i$ and for all $j<i$ there exists a variable $x_h \in supp(u_j) \setminus supp(u_i)$ and an integer $k<i$ such that $supp(u_k) \setminus supp(u_i)=\{x_h\}.$
\end{proposition}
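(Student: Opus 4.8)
The plan is to unwind the definition of linear quotients into a statement about the colon ideals $L_i:=(u_1,\dots,u_{i-1}):u_i$ and then reduce to an elementary observation about when a monomial ideal is generated by a subset of the variables.

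Fix $i$ and set $A_j:=\mathrm{supp}(u_j)\setminus\mathrm{supp}(u_i)$ for $j<i$. First I would use the standard identity $L_i=\sum_{j<i}(u_j):(u_i)=\bigl(u_j/\gcd(u_j,u_i)\ :\ j<i\bigr)$, valid for monomial ideals. Since $I$ is square-free, $u_j/\gcd(u_j,u_i)$ is exactly the square-free monomial $v_j:=\prod_{x\in A_j}x$, and since $u_j$ and $u_i$ are distinct minimal generators we have $u_j\nmid u_i$, so $A_j\neq\emptyset$ and $v_j$ is a non-unit. Hence $I$ has linear quotients along $u_1,\dots,u_m$ precisely when, for every $i$, the ideal $L_i=(v_j:j<i)$ is generated by a subset of $\{x_1,\dots,x_n\}$.

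Next I would establish the key reduction: a monomial ideal $L=(v_1,\dots,v_r)$ with all $v_\ell$ square-free non-units is generated by a set of variables if and only if every $v_\ell$ is divisible by some variable lying in $L$; moreover, a variable $x_h$ lies in the monomial ideal $L$ if and only if $x_h=v_\ell$ for some $\ell$ (a monomial generator dividing a variable must equal that variable). The ``only if'' direction is immediate: if $L=(x:x\in B)$ then each $v_\ell\in L$ is divisible by some $x\in B\subseteq L$. For the converse, put $B=\{x_h:x_h=v_\ell\text{ for some }\ell\}$; then $(x:x\in B)\subseteq L$ trivially, and each $v_\ell$ is divisible by some $x\in B$, so $L\subseteq(x:x\in B)$.

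Applying this with $L=L_i$ shows that the linear-quotients condition at step $i$ is equivalent to: for every $j<i$ there is a variable $x_h\in A_j=\mathrm{supp}(u_j)\setminus\mathrm{supp}(u_i)$ with $x_h\in L_i$, i.e.\ $x_h=v_k$ for some $k<i$, i.e.\ $\mathrm{supp}(u_k)\setminus\mathrm{supp}(u_i)=\{x_h\}$. Quantifying over all $i$ yields exactly the criterion in the statement. I expect no serious obstacle here; the only points needing a little care are the precise formulation of the ``generated by variables'' reduction, the remark that a single variable belongs to a monomial ideal only when it is one of the generators, and the observation that $A_j\neq\emptyset$ so that no $v_j$ degenerates to a unit.
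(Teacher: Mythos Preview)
Your argument is correct. The paper does not actually prove this proposition: it is stated with a reference to \cite[Corollary 8.2.4]{hibi} and used as a known fact, so there is no proof in the paper to compare against. Your direct unwinding of the definition---computing $(u_1,\dots,u_{i-1}):u_i$ as the ideal generated by the square-free monomials $v_j=\prod_{x\in\mathrm{supp}(u_j)\setminus\mathrm{supp}(u_i)}x$ and then characterizing when such an ideal is generated by variables---is exactly the standard proof one finds in the cited reference, and all the small points you flag (that $A_j\neq\emptyset$ because the $u_j$ are \emph{minimal} generators, and that a non-unit monomial dividing a variable must equal it) are handled correctly.
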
 

We now recall some definitions about clutters, for further details see \cite{morey2}.

\begin{definition}
\em A {\em clutter} $\mathcal{C}=(V(\mathcal{C}),E(\mathcal{C}))$ is a pair where $V(\mathcal{C})$ is a finite set, called {\em vertex set}, and $E(\mathcal{C})$ is a collection of non-empty subsets of $V(\mathcal{C})$, called {\em edges}, such that if $e_1$ and $e_2$ are distint edges of $\mathcal{C}$, then $e_1 \not\subseteq e_2$. The clutter $\mathcal{C}$ is {\em $d$-uniform} if all edges have exactly $d$ vertices. A {\em vertex cover} of a clutter $\mathcal{C}$ is a subset $W \subseteq V(\mathcal{C})$ such that $e \cap W \neq \emptyset$, for every $e \in E(\mathcal{C})$. A vertex cover is {\em minimal} if none of its proper subsets is a vertex cover.
\end{definition}

For $d=2$ this is the definition of simple graph. In literature clutters are also known as {\em simple hypergraphs}.  

Let $\mathcal{C}$ be a clutter on the vertex set $V(\mathcal{C})=\{1,...,n\}$. The {\em edge ideal} of the $\mathcal{C}$  is defined by $$I(\mathcal{C}):=\left(\prod_{i \in e} x_i:e \in E(\mathcal{C})\right) \subseteq S.$$ The edges of a clutter can be seen as the facets of a simplicial complex. Then the edge ideal of a clutter coincides with the facet ideal introduced by Faridi in \cite{faridi}. 

The Alexander dual ideal $I(\mathcal{C})^*$ of $I(\mathcal{C})$ is called the {\em cover ideal} of $\mathcal{C}$. Its minimal monomial generators correspond to the vertex covers of $\mathcal{C}$. 

The clutter $\mathcal{C}$ is called {\em Cohen-Macaulay} if $S/I(\mathcal{C})$ is Cohen-Macaulay. 

\begin{ex}
\em Let $V=\{1,2,3,4\}$. The collection $\{\{1,2\},\{1,2,3\},\{3,4\}\}$ is {\em not} a clutter. The clutter $\mathcal{C}$ whose edge set is $E(\mathcal{C})=\{\{1,2,3\},\{3,4\}\}$ is not uniform. Its edge ideal in $\mathbbm{k}[x_1,...,x_4]$ is $I(\mathcal{C})=(x_1x_2x_3,x_3x_4).$ Its cover ideal is $I(\mathcal{C})^*=(x_3,x_1x_4,x_2x_4)$ and in fact the minimal vertex covers of $\mathcal{C}$ are $\{3\},\{1,4\},\{2,4\}$.
\end{ex}

\begin{definition}
\em Let $\mathcal{C}=(V(\mathcal{C}),E(\mathcal{C}))$ be a $d$-uniform clutter. We say that $\mathcal{C}$ is {\em $d$-partite} if there is a partition $V(\mathcal{C})=V_1 \mathcal{t} V_2 \mathcal{t} ... \mathcal{t} V_{d}$ such that all the edges of $\mathcal{C}$ have the form $\{j_1,j_2,...,j_d\}$, where $j_r \in V_r,$ for $1 \leq r \leq d$. 
\end{definition}

For $d=2$, this is the definition of bipartite graph.\\

Let $P=\{p_1,...,p_n\}$ be a finite {\em poset}, with partial order $\leq$. We label each element of $P$ in such a way that if $p_i<p_j$, then $i<j.$ A subset $C \subseteq P$ is called a {\em chain} if $C=p_{i_1}<p_{i_2}<\cdots<p_{i_d}$ is a totally ordered set with respect to the induced order. A {\em multichain} of length $d$ in $P$ is a chain $p_{i_1} \leq p_{i_2} \leq \cdots \leq p_{i_{d}}$, in which we allow repetitions. 

Let $$\mathrm{Max}(P):=\{p \in P:\text{ there is no } q \in P \text{ such that } p<q\}.$$ By our labeling convention, we have $p_n \in \mathrm{Max}(P).$

A {\em poset ideal} $\alpha$ in $P$ is a subset of $P$ with the following property: given $p \in \alpha$ and $q \in P$ such that $q \leq p$, then $q \in \alpha.$  We may consider each poset ideal as a subposet of $P$ with respect to the induced order. Notice that, given $p \in \mathrm{Max}(\alpha)$, then $\alpha \setminus \{p\}$ is again a poset ideal of $P$.

Given $p_1,...,p_k$ elements of $P$, denote by $\langle p_1,...,p_k \rangle$ the smallest poset ideal containing $p_1,...,p_k.$ Let $\mathbb{I}(P)$ be {\em the set of all poset ideals of $P$}. 

\begin{ex}\label{poset1}
\em Let $P=\{p_1,...,p_6\}$ be the poset with $6$ elements, whose Hasse diagram is given in Figure 1.

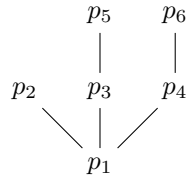
\begin{figure}[ht!]
\centering
\begin{tikzpicture}[scale=0.5]
  \node (a) at (0,0) {$p_1$};
  \node (b) at (-2,2) {$p_2$};
  \node (c) at (0,2) {$p_3$};
  \node (d) at (2,2) {$p_4$};
  \node (e) at (0,4) {$p_5$};
  \node (f) at (2,4) {$p_6$};
  \draw (f) -- (d) -- (a) -- (c) -- (e)
  (a) -- (b);
\end{tikzpicture}
\caption{A poset with 6 elements.} \label{exposet}
\end{figure} 

Examples of chains in $P$ are $p_1<p_5$ and $p_1<p_4<p_6$. Examples of multichains of length $4$ and $5$ are $p_2 \leq p_2 \leq p_2 \leq p_2$, $p_1 \leq p_1 \leq p_1 \leq p_6 \leq p_6$. Clearly $\mathrm{Max}(P)=\{p_2,p_5,p_6\}$. The subposet $\{p_1,p_4,p_6,p_2\}$ is a poset ideal of $P$ and it is equal to $\langle p_2,p_6 \rangle$. The subposet $\{p_2,p_3,p_4\}$ is {\em not} a poset ideal of $P$, because $p_1 \leq p_2$ but $p_1 \notin \{p_2,p_3,p_4\}$. 
\end{ex}

Let $P$ be a poset and $d \geq 2$ be an integer. We recall here the construction of the $d$-partite clutter $\mathcal{C}_{P,d}$.

Let $V=\{x_{ij}:1 \leq i \leq d,1 \leq j \leq n\}$ and $T=\mathbbm{k}[x_{ij}:1 \leq i \leq d,1 \leq j \leq n]$ a polynomial ring on $nd$ variables. The clutter $\mathcal{C}_{P,d}$ is the clutter with vertex set $V=V(\mathcal{C}_{P,d})$ whose edges are 
$$E(\mathcal{C}_{P,d}):=\{\{x_{1j_1},x_{2j_2},...,x_{dj_d}\}: p_{j_1} \leq p_{j_2} \leq \cdots \leq p_{j_d} \text{  is a multichain of length $d$ in $P$}\}.$$

Then the edge ideal of $\mathcal{C}_{P,d}$ is 

$$I(\mathcal{C}_{P,d})=\left(\prod_{r=1}^dx_{rj_r}: p_{j_1} \leq p_{j_2} \leq \cdots \leq p_{j_d} \text{  is a multichain of length $d$ in $P$}\right) \subseteq T.$$

We give a simple example of this construction.

\begin{ex}\label{poset2}
\em Let $P$ be the poset of Example \ref{poset1} and $d=3$. To avoid double indices, we set $x_j=x_{1j}$, $y_j=x_{2j}$ and $z_j=x_{3j}$. Let $T=\mathbbm{k}[x_1,x_2,x_3,y_1,y_2,y_3,z_1,z_2,z_3]$. To each multichain $p_i \leq p_j \leq p_k$ of length $3$ in $P$, we associate the monomial $x_iy_jz_k$. For instance, the multichain $p_1 \leq p_2 \leq p_2$ is associated with the monomial $x_1y_2z_2$. Then the edge ideal of $\mathcal{C}_{P,3}$ in $T$ is

\begin{center}
$I(\mathcal{C}_{P,3})=(x_1y_1z_1,x_2y_2z_2,x_3y_3z_3,x_4y_4z_4,x_5y_5z_5,x_6y_6z_6,x_1y_3z_5,x_1y_4z_6,x_1y_1z_2,x_1y_2z_2,$

$x_1y_1z_3,x_1y_3z_3,x_1y_1z_5,x_1y_5z_5,x_3y_3z_5,x_3y_5z_5,x_1y_1z_4,x_1y_4z_4,x_1y_1z_6,x_1y_6z_6,x_4y_4z_6,x_4y_6z_6).$
\end{center}
\end{ex}

Let $V_i:=\{x_{ij}:1 \leq j \leq n\},$ for $1 \leq i \leq d.$ Then $V(\mathcal{C}_{P,d})=V_1 \mathcal{t} V_2 \mathcal{t} ... \mathcal{t} V_{d}$. By construction, $\mathcal{C}_{P,d}$ is $d$-partite. Note that for $d=2$ this is the construction of a bipartite graph starting from a poset $P$, given by Herzog and Hibi (see \cite{hibi} and \cite{hh2}). For $d \geq 3$, {\em not} all Cohen-Macaulay $d$-uniform $d$-partite clutters arise from a poset (see e.g. \cite[Example 3.4]{morey}).\\

Following \cite[Theorem 1.1]{ehm}, we describe explicitly the generators of the ideal $I(\mathcal{C}_{P,d})^*$. 

A {\em poset multideal} of degree $d$ in $P$ is a $d$-tuple $\underline{\bm{\alpha}}=(\alpha_i)_{1 \leq i \leq d}$, such that 

$$\alpha_i \in \mathbb{I} \! \left(P \setminus \bigcup_{j=1}^{i-1}\alpha_j\right) \text{\quad and \quad} \alpha_d=P \setminus \left(\bigcup_{j=1}^{d-1}\alpha_j\right).$$

Notice that $\bigcup_{i=1}^d \alpha_i=P$.


Let $\underline{\bm{\alpha}}$ be a poset multideal of degree $d$ in $P.$ We define the monomial $$u_{\underline{\bm{\alpha}}}:=\prod_{i=1}^d\prod_{p_j \in \alpha_i}x_{ij}.$$ Each monomial $u_{\underline{\bm{\alpha}}}$ has degree $n=|P|.$ Here and in what follows, $|\cdot|$ denotes the cardinality of a finite set. By \cite[Theorem 1.1]{ehm}, we have $$I(\mathcal{C}_{P,d})^*=(u_{\underline{\bm \alpha}})_{\underline{\bm \alpha} \in \mathbb{I}_d(P)}.$$ To simplify the notation, from now on we denote by $H_{P,d}$ the ideal $I(\mathcal{C}_{P,d})^*$. 


\begin{ex}
\em Let $P$ be the poset of Example \ref{poset1}. Examples of poset multideal of degree $3$ and $4$ in $P$, respectively, are 
$$\underline{\bm{\alpha}}=(\{p_1,p_3\},\{p_2,p_5\},\{p_4,p_6\}) \text{   and   } \underline{\bm{\gamma}}=(\{p_1,p_2,p_4\},\emptyset,\{p_3,p_6\},\{p_5\}).$$ Consider the notation of Example \ref{poset2} to avoid double indices, setting $t_j=x_4j$. We have the monomials $u_{\underline{\bm{\alpha}}}=x_1x_3y_2y_5z_4z_6$ and $u_{\underline{\bm{\gamma}}}=x_1x_2x_4z_3z_6t_5$.
\end{ex}

In \cite[Theorem 2.4]{ehm}, the authors proved that $H_{P,d}$ is a weakly polymatroidal ideal. Weakly polymatroidal ideals have been introduced by Hibi and Kokubo in \cite{kokubo}. From \cite[Theorem 1.3]{moha}, it follows that a weakly polymatroidal ideal has linear quotients. Then the ideal $H_{P,d}$ has linear quotients. In particular it has a linear resolution. Hence $S/I(\mathcal{C}_{P,d})$ is Cohen-Macaulay, by \cite[Theorem 3]{eagrein}.



\section{The recursive formula}



Our main result is a recursive formula for the Betti numbers of $H_{P,d}$, extending a recent result due to Francisco, H\`a and Van Tuyl, \cite[Theorem 3.8]{ur}. We recall that the ideal $H_{P,d}$ is generated in a single degree $n=|P|$ and that it has a linear resolution.

\begin{theorem}\label{cmdip}
Let $P \neq \emptyset$ be a finite poset, $d \geq 2$ be an integer. Then, for $i \geq 0$, $$\beta_i(H_{P,d})=\sum_{\alpha \in \mathbb{I}(P)} \left[\sum_{h=0}^{|\mathrm{Max}(\alpha)|}\binom{|\mathrm{Max}(\alpha)|}{h}\beta_{i-h}(H_{\alpha,d-1}) \right].$$
\end{theorem}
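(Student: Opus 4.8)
The plan is to set up an $x_i$-splitting of $H_{P,d}$ where the distinguished variable is $x_{1n}$ (recall $p_n \in \mathrm{Max}(P)$ by the labeling convention), and then to iterate. Observe that a poset multideal $\underline{\bm\alpha}$ of degree $d$ in $P$ has $p_n$ appearing in exactly one of the $\alpha_i$; the generator $u_{\underline{\bm\alpha}}$ is divisible by $x_{1n}$ precisely when $p_n \in \alpha_1$. Since $p_n$ is maximal, $\alpha_1 \ni p_n$ forces nothing on the rest of $\alpha_1$ beyond $\alpha_1 \in \mathbb{I}(P)$, and after removing $p_n$ we obtain an arbitrary poset ideal of $P$. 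This suggests the $x_{1n}$-partition $H_{P,d} = x_{1n}J + K$, where $K$ is generated by the $u_{\underline{\bm\alpha}}$ with $p_n \notin \alpha_1$ and $J = (u_{\underline{\bm\alpha}}/x_{1n} : p_n \in \alpha_1)$. The first step is to identify $K$ with (something built from) $H_{P,d}$ restricted to multideals avoiding $p_n$ in the first slot, and $J$ with an ideal whose generators are $u_{\underline{\bm\alpha}}$ for multideals of $P \setminus \{p_n\}$ — i.e.\ an ideal closely tied to $H_{P\setminus\{p_n\}, d}$ or to $H_{\langle\cdot\rangle, d-1}$-type pieces — using the structural description of poset multideals and the weakly polymatroidal property to guarantee $J$ (and $K$) have linear quotients, hence linear resolutions, so that Proposition~\ref{urur} applies and $H_{P,d} = x_{1n}J + K$ is a Betti splitting with $\beta_i(H_{P,d}) = \beta_i(x_{1n}J) + \beta_i(K) + \beta_{i-1}(x_{1n}J \cap K)$.

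The heart of the argument is to iterate this splitting over all maximal elements. Write $\mathrm{Max}(P) = \{p_{m_1}, \dots, p_{m_r}\}$ with $r = |\mathrm{Max}(P)|$. I would peel off the maximal elements one at a time: splitting on $x_{1,m_1}$, then inside the relevant pieces splitting on $x_{1,m_2}$, and so on. At each stage the "$K$-part" corresponds to multideals that keep pushing maximal elements out of $\alpha_1$; once a maximal element $p$ is forced out of $\alpha_1$, it can be absorbed and the problem on the remaining poset $\alpha = P$ with that structure reduces in the first coordinate, leaving a degree-$(d-1)$ problem on a poset ideal. The combinatorial bookkeeping should show that, after the full cascade, $\beta_i(H_{P,d})$ is a sum indexed by subsets $h$-element subsets of $\mathrm{Max}(P)$ of Betti numbers $\beta_{i-h}$ of degree-$(d-1)$ cover ideals, and the key point is to recognize that the poset ideals $\alpha$ that appear, together with the shift $h = |\mathrm{Max}(\alpha)|$, are exactly captured by summing over $\alpha \in \mathbb{I}(P)$ with binomial coefficient $\binom{|\mathrm{Max}(\alpha)|}{h}$. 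Here is where I expect to invoke the two auxiliary results flagged in the introduction, Proposition~\ref{splitinside} (controlling the intersection $x_{1n}J \cap K$ and showing the relevant sub-splittings happen "inside" the pieces) and Proposition~\ref{final1}, to identify the intersection ideals with shifted copies of lower-degree $H_{\alpha,d-1}$ and collapse the telescoping sum.

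Concretely, the proof would be an induction on $d$ (base case $d = 2$ essentially being \cite[Theorem 3.8]{ur}, or rather the reformulation Corollary~\ref{cmbip}) combined with an inner induction on $|\mathrm{Max}(P)|$ (or on $|P|$). For the inductive step: apply the $x_{1n}$-splitting, use Proposition~\ref{urur} to get $\beta_i(H_{P,d}) = \beta_i(J) + \beta_i(K) + \beta_{i-1}(x_{1n}J\cap K)$; identify $\beta_i(K)$ via the inner induction (one fewer maximal element effectively, or the poset $P \setminus \{p_n\}$ with $p_n$'s role transferred), identify $\beta_i(J)$ and $\beta_{i-1}(x_{1n}J \cap K)$ via Propositions~\ref{splitinside} and~\ref{final1} as assembled from $\beta_{*}(H_{\alpha,d-1})$ over appropriate $\alpha$; then check that the resulting recursion, when unwound, matches the claimed closed sum over $\mathbb{I}(P)$ — a Pascal-type identity $\binom{k}{h} = \binom{k-1}{h} + \binom{k-1}{h-1}$ doing the reconciliation of the extra maximal element. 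The main obstacle, and the step I would be most careful about, is precisely the identification of the intersection ideal $x_{1n}J \cap K$: showing it has a linear resolution (or at least the needed Betti numbers), and showing its generators reorganize into the degree-$(d-1)$ clutters $H_{\alpha,d-1}$ with the correct homological shift — this is the content I am deferring to Propositions~\ref{splitinside} and~\ref{final1}, and it is where the poset-multideal combinatorics is genuinely doing work rather than being routine.
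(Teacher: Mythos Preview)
Your overall strategy---an $x_i$-splitting followed by the auxiliary Propositions~\ref{splitinside} and~\ref{final1}---is right, but two concrete choices derail it. First, the splitting variable: you split on $x_{1n}$, whereas the paper splits on $x_{dn}$, and this is not a cosmetic difference. With $x_{dn}$, the $K$-part consists of generators $u_{\underline{\bm\alpha}}$ with $p_n \notin \alpha_d$, i.e.\ $p_n \in \alpha_1 \cup \cdots \cup \alpha_{d-1}$; grouping by the poset ideal $\alpha := \alpha_1 \cup \cdots \cup \alpha_{d-1} \in \mathbb{I}(P,p_n)$ yields $K = \sum_{\alpha} J_\alpha$ with each $J_\alpha$ a monomial times $H_{\alpha,d-1}$---exactly the shape of the formula. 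With your $x_{1n}$-splitting, the $K$-part groups naturally by $\gamma = \alpha_1$ (a poset ideal \emph{not} containing $p_n$), and the pieces are monomials times $H_{P\setminus\gamma,\,d-1}$. But $P \setminus \gamma$ is an order filter, not an ideal, so what emerges are numbers $\beta_*(H_{P\setminus\gamma,\,d-1})$ for filters, not the $\beta_*(H_{\alpha,d-1})$ for ideals $\alpha$ that the theorem asserts. At best your route produces the dual formula (the one for $P^{\mathrm{op}}$), and matching it to the stated formula is an extra step you have not supplied.

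Second, the induction structure and the location of the difficulty are both misidentified. The paper inducts on $|P|$, removing \emph{one} maximal element $p_n$; there is no cascade over all of $\mathrm{Max}(P)$, and the partition $\mathbb{I}(P) = \mathbb{I}(P\setminus\{p_n\}) \sqcup \mathbb{I}(P,p_n)$ is what closes up the bookkeeping. Moreover, the intersection you flag as ``the main obstacle'' is actually the easy part: Lemma~\ref{inclusion}(i) gives $J_\alpha \subseteq H_{P\setminus\{p_n\},d}$ for every $\alpha$, so $x_{dn}H_{P\setminus\{p_n\},d} \cap K = x_{dn}K$ and the splitting formula becomes simply $\beta_i(H_{P,d}) = \beta_i(H_{P\setminus\{p_n\},d}) + \beta_i(K) + \beta_{i-1}(K)$. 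The genuine work is computing $\beta_i(K) = \beta_i\bigl(\sum_{\alpha \in \mathbb{I}(P,p_n)} J_\alpha\bigr)$, carried out in Proposition~\ref{final}: one totally orders the $J_\alpha$'s, proves each partial sum has linear quotients (Proposition~\ref{partialsumlinear}), identifies each successive intersection as $\N_\alpha J_\alpha$ for a variable ideal $\N_\alpha$ with $|G(\N_\alpha)| = |\mathrm{Max}(\alpha)| - 1$ (via Lemma~\ref{inclusion}(ii)--(iii)), and then invokes Proposition~\ref{final1}, which is where Proposition~\ref{splitinside} is actually used.
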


Before proving the theorem, we explain the situation in some degenerate cases. 

\begin{remark}\label{degenerate} 
\begin{itemize}
\item[{\em (i)}] If $P=\emptyset$, then $H_{\emptyset,d}=0$.
\item[{\em (ii)}] If $P=\{p_1\}$, then $H_{P,d}=(x_{11},x_{21},\dots,x_{d1})$. It is well-known that this ideal has a linear resolution and its Betti numbers are given by $$\beta_i(H_{\{p_1\},d})=\binom{d}{i+1}, \text{   for   } i \geq 0.$$ 
\item[{\em (iii)}] If $P \neq \emptyset$ is a finite poset and $d=1$, then $H_{P,1}$ is generated by a single monomial $m=\prod_{j=1}^nx_{1j}$. Then $\beta_{0}(H_{P,1})=1$ and $\beta_{i}(H_{P,1})=0$ for each $i \geq 1$.
\end{itemize}
\end{remark}

We now prove some technical results, which will be useful later. 

\begin{proposition}\label{splitinside}
Let $I \subseteq S=\mathbbm{k}[x_1,...,x_n]$ be a square-free monomial ideal with a $d$-linear resolution and $\M=(x_1,...,x_n)$. Let $k$ be a positive integer and $\N=(y_1,...,y_k) \subseteq \mathbbm{k}[y_1,...,y_k]$. Then the ideal $\N I \subseteq R=\mathbbm{k}[x_1,...,x_n,y_1,...,y_k]$ has a $(d+1)$-linear resolution and $$\beta_i(\N I)=\sum_{s=0}^{k-1}\binom{k}{s+1}\beta_{i-s}(I), \text{  for every  } i \geq 0.$$
\end{proposition}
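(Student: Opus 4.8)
The plan is to induct on $k$, using an $y_k$-splitting at each step. For $k=1$ the ideal is $y_1 I$, which is isomorphic to $I$ as a module (up to a degree shift by one), so $\beta_i(y_1 I)=\beta_i(I)$ for all $i$, matching the claimed formula (the only term is $s=0$, $\binom{1}{1}\beta_i(I)$), and a $d$-linear resolution of $I$ shifts to a $(d+1)$-linear resolution of $y_1 I$. For the inductive step, write $\N=(y_1,\dots,y_k)$ and $\N'=(y_1,\dots,y_{k-1})$, and consider the decomposition $\N I = y_k I + \N' I$. I would check this is a $y_k$-partition: the generators of $\N I$ are the $y_j g$ with $g\in G(I)$; those divisible by $y_k$ are exactly $y_k g$, and dividing by $y_k$ returns $I$; the remaining generators $y_j g$ with $j<k$ generate $\N' I$, and $G(\N I)$ is the disjoint union of $G(y_k I)$ and $G(\N' I)$ since $I$ is square-free in disjoint variables (so no $y_k g$ equals any $y_j g'$). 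Since $I$ has a linear resolution, Proposition \ref{urur}(ii) applies and gives a Betti splitting:
$$\beta_i(\N I)=\beta_i(y_k I)+\beta_i(\N' I)+\beta_{i-1}(y_k I \cap \N' I).$$

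The key computation is the intersection $y_k I \cap \N' I$. Since the $y_j$ and the $x_i$ live in disjoint variable sets and $I$ is generated by the $g$'s, one has $y_k I \cap \N' I = y_k \N' I$: a monomial in the intersection is divisible by $y_k$, by some $y_j$ with $j<k$, and by some $g\in G(I)$ (using that membership in a monomial ideal generated by the $y_j g$ forces divisibility by some generator, and square-freeness in the $x$-variables lets us peel off a single $g$). Hence $\beta_{i-1}(y_k I\cap \N' I)=\beta_{i-1}(y_k\N' I)=\beta_{i-1}(\N' I)$, again because multiplying by the single new variable $y_k$ is an isomorphism up to degree shift. Also $\beta_i(y_k I)=\beta_i(I)$. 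Plugging in:
$$\beta_i(\N I)=\beta_i(I)+\beta_i(\N' I)+\beta_{i-1}(\N' I).$$
Now substitute the inductive hypothesis $\beta_j(\N' I)=\sum_{s=0}^{k-2}\binom{k-1}{s+1}\beta_{j-s}(I)$ for $j=i$ and $j=i-1$, and collect the coefficient of $\beta_{i-s}(I)$: it is $\binom{k-1}{s+1}+\binom{k-1}{s}=\binom{k}{s+1}$ by Pascal's rule (with the $s=0$ term picking up the extra $\beta_i(I)$ as $\binom{k-1}{0}$, and the $s=k-1$ term coming purely from the shifted sum), which is exactly the asserted formula for $\N I$. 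The linearity of the resolution propagates: $\N' I$ has a $(d+1)$-linear resolution by induction, $y_k I$ has one by the shift argument, and in a Betti splitting the resolution of $I$ is "glued" from those of the pieces and the shifted intersection — concretely, since all three of $y_k I$, $\N' I$, $y_k \N' I$ have $(d+1)$-linear resolutions, the Betti-splitting formula forces $\beta_{i,j}(\N I)=0$ unless $j=i+(d+1)$.

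The main obstacle is the clean verification of the intersection identity $y_k I\cap\N' I = y_k\N' I$ and the disjointness of the generating sets; these are where the square-freeness of $I$ and the separation of variables are genuinely used, and one must be careful that no cancellation or coincidence of generators occurs. Everything else — the degree-shift isomorphisms, Pascal's identity, and propagating linearity — is routine once the splitting is in place. One should also record the base-case bookkeeping for $i=0$ and the convention $\beta_{-1}=0$ so the recursion is well-posed for all $i\ge 0$.
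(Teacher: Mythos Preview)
Your approach is essentially the same as the paper's: induct on $k$, split $\N I = y_k I + \N' I$ as a $y_k$-partition, invoke Proposition~\ref{urur}(ii), identify the intersection as $y_k\N' I$, and finish with Pascal's rule. The one substantive difference is how linearity is established: the paper proves $(d+1)$-linearity up front by observing that $y_1,\dots,y_k$ is an $IR$-regular sequence and citing \cite[Theorem~2.2]{aldo}, whereas you derive it inductively from the graded Betti-splitting identity. Your route is more self-contained; the paper's is quicker.

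One small slip to fix: you write that ``all three of $y_kI$, $\N'I$, $y_k\N'I$ have $(d+1)$-linear resolutions,'' but $y_k\N'I$ actually has a $(d+2)$-linear resolution (it is $\N'I$ shifted by one). Your conclusion is still correct, because in the Betti-splitting formula the intersection term enters as $\beta_{i-1,j}$, and $\beta_{i-1,j}(y_k\N'I)=0$ unless $j=(i-1)+(d+2)=i+(d+1)$; so the shift compensates exactly. Just state the degree correctly.
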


{\em Proof.} We first prove that the ideal $\N I$ has a $(d+1)$-linear resolution. Notice that $y_1,...,y_k$ is an $IR$-regular sequence. By \cite[Theorem 2.2]{aldo}, $\N I$ has a $(d+1)$-linear resolution, since $I$ has a $d$-linear resolution.

To prove the formula we proceed by induction on $k.$ If $k=1$, we have nothing to show, since $\beta_i(\N I)=\beta_i(y_1I)=\beta_i(I)$, for every $i \geq 0.$ Let $k>1.$ One has $\N I=(y_1,...,y_{k-1})I+y_kI$. Notice that $G(\N I)$ is the disjoint union of $G((y_1,...,y_{k-1})I)$ and $G(y_kI)$. By Proposition \ref{urur} (ii), this decomposition is a Betti splitting of $\N I$, because $y_kI$ has a $(d+1)$-linear resolution. Then $$\beta_{i}(\N I)=\beta_{i}((y_1,...,y_{k-1})I)+\beta_{i}(y_kI)+\beta_{i-1}(y_k(y_1,...,y_{k-1})I), \text{  for every  } i \geq 0.$$ 

Since $\beta_{i}(y_kI)=\beta_i(I)$ and $\beta_{i-1}(y_k(y_1,...,y_{k-1})I)=\beta_{i-1}((y_1,...,y_{k-1})I)$, for every $i \geq 0$, by induction $$\beta_i(\N I)=\sum_{s=0}^{k-2}\binom{k-1}{s+1}\beta_{i-s}(I)+\beta_{i}(I)+\sum_{s=0}^{k-2}\binom{k-1}{s+1}\beta_{i-1-s}(I)=\sum_{s=0}^{k-1}\binom{k}{s+1}\beta_{i-s}(I). \qed$$

\begin{proposition}\label{final1}
Let $d,t \geq 1$ be integers. Let $\{I_k\}_{1 \leq k \leq t}$ be an ordered collection of monomial ideals with a $d$-linear resolution in $S=\mathbbm{k}[x_1,...,x_n]$. Set $I:=\sum_{k=1}^tI_k$. Assume $G(I)$ is the disjoint union of $\{G(I_k)\}_{1 \leq k \leq t}$ and that:
\begin{itemize}
\item[{\em (i)}] for every $2 \leq j \leq t$, the ideal $\sum_{k=1}^j I_k$ have a $d$-linear resolution;
\item[{\em (ii)}] for every $2 \leq j \leq t$, there exists an ideal $\N_j$, generated by a subset of the variables $x_1,...,x_n$, such that $I_j \cap \left( \sum_{k=1}^{j-1}I_k \right)=\N_j I_j$.
\end{itemize}
Then $$\beta_i(I)=\beta_i(I_1)+\sum_{k=2}^{t} \left[\sum_{s=0}^{|G(\N_k)|}\binom{|G(\N_k)|}{s}\beta_{i-s}(I_k) \right], \text{ for  } i \geq 0.$$
\end{proposition}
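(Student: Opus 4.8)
The plan is to induct on $t$, peeling off the last ideal $I_t$ by means of the Betti splitting of Proposition \ref{urur}(i), and then rewriting the intersection term that appears with Proposition \ref{splitinside}.

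For the base case $t = 1$ there is nothing to prove: the sum over $2 \le k \le 1$ is empty and the assertion reduces to $\beta_i(I) = \beta_i(I_1)$, which holds since $I = I_1$. Assume now $t \ge 2$ and put $J := \sum_{k=1}^{t-1} I_k$ and $K := I_t$, so that $I = J + K$. All the $I_k$ are generated in degree $d$, hence so are $J$, $K$ and $I$; and since $G(I)$ is the disjoint union of the $G(I_k)$, minimality of generators forces $G(J) = \bigsqcup_{k=1}^{t-1} G(I_k)$, whence $G(I) = G(J) \sqcup G(K)$. Moreover $K = I_t$ has a $d$-linear resolution by hypothesis, and so does $J$ — by condition (i) applied to $j = t-1$ when $t \ge 3$, and simply because $J = I_1$ when $t = 2$. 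Hence Proposition \ref{urur}(i) applies and $I = J + K$ is a Betti splitting, so that
\begin{equation*}
\beta_i(I) = \beta_i(J) + \beta_i(I_t) + \beta_{i-1}(J \cap I_t) \qquad \text{for all } i \ge 0 .
\end{equation*}

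Next I would treat the two terms involving $I_t$. By condition (ii) applied to $j = t$, one has $J \cap I_t = \N_t I_t$, where $\N_t$ is generated by $k_t := |G(\N_t)|$ variables. Applying Proposition \ref{splitinside} to $I_t$ (which has a $d$-linear resolution) and to $\N_t$ gives, after the substitution $s \mapsto s-1$,
\begin{equation*}
\beta_{i-1}(\N_t I_t) = \sum_{s=0}^{k_t - 1} \binom{k_t}{s+1} \beta_{i-1-s}(I_t) = \sum_{s=1}^{k_t} \binom{k_t}{s} \beta_{i-s}(I_t) ,
\end{equation*}
and therefore $\beta_i(I_t) + \beta_{i-1}(J \cap I_t) = \sum_{s=0}^{k_t} \binom{k_t}{s} \beta_{i-s}(I_t)$, the new $s = 0$ term being exactly $\beta_i(I_t)$. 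Finally, the ordered collection $\{I_k\}_{1 \le k \le t-1}$, together with $\N_2, \dots, \N_{t-1}$, satisfies all the hypotheses of the proposition with $t-1$ in place of $t$ — conditions (i) and (ii) for $J$ are just the restrictions of the corresponding conditions for $I$ to the range $2 \le j \le t-1$ — so the inductive hypothesis gives
\begin{equation*}
\beta_i(J) = \beta_i(I_1) + \sum_{k=2}^{t-1} \left[ \sum_{s=0}^{|G(\N_k)|} \binom{|G(\N_k)|}{s} \beta_{i-s}(I_k) \right] .
\end{equation*}
Substituting the last two displays into the Betti splitting formula yields precisely the claimed identity.

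No single step is a serious obstacle: all the homological content sits in Propositions \ref{urur} and \ref{splitinside}, and what remains is essentially organization. The points deserving some care are the degenerate base case $t = 2$ (where condition (i) is vacuous and one uses the linearity of $I_1$ directly), the correct reindexing of the binomial sum coming from Proposition \ref{splitinside}, and the verification that that proposition is legitimately invoked here — that is, that the variables generating $\N_t$ may be regarded as the fresh variables $y_1, \dots, y_k$ occurring in its statement.
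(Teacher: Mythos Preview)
Your proof is correct and follows the same approach as the paper's: induct on $t$, peel off $I_t$ via the Betti splitting of Proposition~\ref{urur}(i) (both summands having $d$-linear resolution), rewrite the intersection term through condition (ii) and Proposition~\ref{splitinside}, reindex, and finish by the inductive hypothesis. Your closing caveat about whether the variables generating $\N_t$ may be treated as the fresh variables $y_1,\dots,y_k$ in Proposition~\ref{splitinside} is a fair point that the paper's own proof likewise leaves implicit; in the intended application (Proposition~\ref{final}) those variables are genuinely disjoint from the support of $I_t$, so no difficulty arises there.
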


\begin{proof}
We may assume $t>1$, since for $t=1$ the formula is clear. Consider the following splitting of $I$: $$I=\sum_{k=1}^{t-1} I_k+I_t.$$ By our assumption, $G(I)$ is the disjoint union of $G(I_t)$ and $G(\sum_{k=1}^{t-1} I_k)$. Moreover the ideals $I_t$ and $\sum_{k=1}^{t-1} I_k$ have a $d$-linear resolution by (i), with $j=t-1$. By Proposition \ref{urur} (i), the splitting above is a Betti splitting of $I$. By assumption (ii), we have $I_t \cap \sum_{k=1}^{t-1} I_k=\N_t I_t$. Then $$\beta_i(I)=\beta_i(I_t)+\beta_i \left(\sum_{k=1}^{t-1} I_k \right)+\beta_{i-1}(\N_t I_t), \text{  for every  } i \geq 0.$$ By Proposition \ref{splitinside}, we get $$\beta_i(I)=\beta_i(I_t)+\beta_i \left(\sum_{k=1}^{t-1} I_k \right)+\sum_{h=0}^{|G(\N_t)|-1}\binom{|G(\N_t)|}{h+1}\beta_{i-1-h}(I_t), \text{  for every  } i \geq 0.$$  We conclude by setting $s=h+1$ and by induction on $t$.
\end{proof}

Let $P=\{p_1,...,p_n\}$ be a poset and recall that, for our labeling convention, $p_n \in \mathrm{Max}(P)$. Let $\psi=\langle p_n \rangle$ the smallest poset ideal of $P$ containing $p_n$. Define $$\mathbb{I}(P,p_n):=\{\alpha \in \mathbb{I}(P):p_n \in \alpha\}.$$ 

Let $d \geq 2$ be an integer. For every $\alpha \in \mathbb{I}(P,p_n)$, we define the ideal $$J_{\alpha}:=\left( \prod_{p_i \notin \alpha}x_{di} \right)H_{\alpha,d-1}.$$ In order to define the ideal $J_{\alpha}$, we consider $\alpha$ as a subposet of $P$ with respect to the induced order. 

Notice that $J_{\alpha}$ is generated in a single degree $n=|P|$. Since the ideal $H_{\alpha,d-1}$ has a linear resolution, it follows that $J_{\alpha}$ has a linear resolution. 
\vspace{2mm}
We now give an order of the ideals $\{J_{\alpha}\}_{\alpha \in \mathbb{I}(P,p_n)}$. Let $\alpha_1,\alpha_2 \in \mathbb{I}(P,p_n)$, with $\alpha_1 \neq \alpha_2$. We set $\alpha_1 \triangleleft \alpha_2$ if one of the following conditions is satisfied:

\begin{itemize}
\item[(i)] $|\alpha_1|<|\alpha_2|$;
\item[(ii)] $|\alpha_1|=|\alpha_2|$ and $\mathrm{max}\{j:p_j \in \alpha_1 \setminus \alpha_2\}<\mathrm{max}\{j:p_j \in \alpha_2 \setminus \alpha_1\}$. 
\end{itemize}  

Note that, with respect to the above order, $\psi \ \underline{\triangleleft} \ \alpha$, for every $\alpha \in \mathbb{I}(P,p_n)$.

\begin{ex}\label{poset3}
\em Let $P$ be the poset of Example \ref{poset1}. Clearly $p_6 \in \mathrm{Max}(P)$ and $\psi=\langle p_6 \rangle=\{p_1,p_4,p_6\}$. The poset ideals containing $p_6$ are $$\mathbb{I}(P,p_n):=\{\{p_1,p_4,p_6\},\{p_1,p_4,p_6,p_2\},\{p_1,p_4,p_6,p_3\},\{p_1,p_4,p_6,p_2,p_3\},\{p_1,p_4,p_6,p_3,p_5\},P\}.$$ Consider $\alpha=\{p_1,p_4,p_6,p_2\}$, $d=3$ and 
set of $P$ with respect of the induced order:
\begin{center} 
$(\emptyset,P),(\{p_1\},\{p_2,p_4,p_6\}),(\{p_1,p_2\},\{p_4,p_6\}),(\{p_1,p_4\},\{p_2,p_6\}),$
$(\{p_1,p_2,p_4\},\{p_6\}),(\{p_1,p_4,p_6\},\{p_2\}),(P,\emptyset).$ 
\end{center}
Then $J_{\alpha}=z_3z_5(y_1y_2y_4y_6,x_1y_2y_4y_6,x_1x_2y_4y_6,x_1x_4y_2y_6,x_1x_2x_4y_6,x_1x_4x_6y_2,x_1x_2x_4x_6).$

Notice that the given order of the elements of $\mathbb{I}(P,p_n)$ is the order $\triangleleft$ defined above.
\end{ex}

\begin{proposition}\label{partialsumlinear}
Let $P$ be a poset, $p_n \in \mathrm{Max}(P)$ and fix $\alpha \in \mathbb{I}(P,p_n)$. Then the ideal $\displaystyle{J:=\sum_{\phi \in \mathbb{I}(P,p_n),\phi \underline{\triangleleft} \alpha}J_{\phi}}$ has linear quotients. In particular $J$ has a linear resolution.
\end{proposition}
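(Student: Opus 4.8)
The plan is to prove that $J:=\sum_{\phi \in \mathbb{I}(P,p_n),\,\phi \,\underline{\triangleleft}\,\alpha} J_{\phi}$ has linear quotients by exhibiting an explicit order on its monomial generators and verifying the combinatorial criterion of Proposition \ref{linquot}. Since each $J_\phi$ is generated in the single degree $n=|P|$ (and $J$ too), once linear quotients are established the linear resolution follows from the facts recalled after the definition of linear quotients. First I would list $G(J)$ as the disjoint union of the $G(J_\phi)$ for $\phi \,\underline{\triangleleft}\,\alpha$ — the disjointness coming from the fact that a generator of $J_\phi$ has the variables $x_{dj}$ with $p_j \notin \phi$ appearing, so the set $\{j : x_{dj} \mid u\}$ reads off the complement of $\phi$, hence $\phi$ itself. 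Then I would order $G(J)$ by first grouping according to the order $\triangleleft$ on the index set $\mathbb{I}(P,p_n)$ (so all generators of $J_{\phi_1}$ precede all generators of $J_{\phi_2}$ whenever $\phi_1 \triangleleft \phi_2$), and within each block $G(J_\phi) = \bigl(\prod_{p_i \notin \phi} x_{di}\bigr)\cdot G(H_{\phi,d-1})$ use the linear-quotients order that $H_{\phi,d-1}$ already possesses (it is weakly polymatroidal, hence has linear quotients, by the results cited at the end of Section 2).

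The verification then splits into two cases of Proposition \ref{linquot}. If $u_i, u_j$ lie in the same block $G(J_\phi)$, the required variable $x_h \in \mathrm{supp}(u_j)\setminus\mathrm{supp}(u_i)$ and the witness $u_k$ with $\mathrm{supp}(u_k)\setminus\mathrm{supp}(u_i) = \{x_h\}$ are inherited verbatim from the linear-quotients property of $H_{\phi,d-1}$, since $u_i, u_j, u_k$ all share the common factor $\prod_{p_i \notin \phi} x_{di}$ which cancels out of all the support-differences. The genuine work is the cross-block case: $u_i \in G(J_{\phi_i})$ and $u_j \in G(J_{\phi_j})$ with $\phi_j \triangleleft \phi_i$. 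Here I would pick an element witnessing $\phi_j \triangleleft \phi_i$ — concretely, in case (i) an index $m$ with $p_m \in \phi_i \setminus \phi_j$ and in case (ii) the index $m = \max\{j : p_j \in \phi_i \setminus \phi_j\}$ satisfying $m < \max\{j : p_j \in \phi_j\setminus\phi_i\}$ — so that $x_{dm} \in \mathrm{supp}(u_j)\setminus\mathrm{supp}(u_i)$ (because $p_m \in \phi_i$ forces $x_{dm} \nmid u_i$, while $p_m \notin \phi_j$ forces $x_{dm} \mid u_j$). I then need a witness $u_k$, earlier than $u_i$, with $\mathrm{supp}(u_k)\setminus\mathrm{supp}(u_i) = \{x_{dm}\}$. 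The natural candidate is the generator of $J_{\phi_i \setminus \{p_m\}}$ (or an appropriate poset ideal obtained from $\phi_i$ by removing a maximal element in $\phi_i \setminus \phi_j$, which remains a poset ideal containing $p_n$ as noted in the Preliminaries) obtained from $u_i$ by dividing by $x_{ti}$ for the block-index $t$ that places $p_m$ in $\phi_i$ and multiplying by $x_{dm}$ — so that the only change of support is $x_{dm}$ being added (the $x_{ti}$ position stays, since after removing $p_m$ from $\phi_i$, $p_m$ reappears in some later part $\alpha_t$ of the poset multideal, but crucially $t$ is chosen to be a part index already occupied in $u_i$ so its support does not shrink).

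The main obstacle I expect is exactly this construction of the witness $u_k$ and the proof that it is $\triangleleft$-earlier than $u_i$: one must check that $\phi_i \setminus \{p_m\}$ (with $p_m$ chosen maximal in $\phi_i\setminus\phi_j$) still lies in $\mathbb{I}(P,p_n)$ and still precedes $\alpha$, that $\phi_i\setminus\{p_m\} \triangleleft \phi_i$ (immediate from $|\phi_i\setminus\{p_m\}| < |\phi_i|$), and — the delicate point — that there is a genuine generator $u_k$ of $J_{\phi_i\setminus\{p_m\}}$ whose support differs from that of $u_i$ in precisely the single variable $x_{dm}$. This amounts to tracking how removing a maximal element $p_m$ from the poset ideal $\phi_i$ affects the poset multideals of degree $d-1$ underlying $H_{\phi_i,d-1}$ versus $H_{\phi_i\setminus\{p_m\},d-1}$: one takes the multideal corresponding to $u_i$, deletes $p_m$ from whatever part $\alpha_t$ it sits in, and re-inserts it into the last part $\alpha_{d-1}$ — checking this yields a valid multideal of $\phi_i\setminus\{p_m\}$ and that the corresponding monomial, times $x_{dm}$, has support equal to $\mathrm{supp}(u_i) \cup \{x_{dm}\}$. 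If for a given $u_i$ the element $p_m$ already sits in the last part $\alpha_{d-1}$ of its multideal, a small variant of the argument (choosing a different maximal element of $\phi_i\setminus\phi_j$, or arguing directly) handles it. Once the witness and its ordering position are pinned down, the two cases of Proposition \ref{linquot} are complete and the proposition follows.
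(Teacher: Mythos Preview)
Your strategy matches the paper's exactly: order $G(J)$ block-by-block via $\triangleleft$ and, inside each block $G(J_\phi)$, by the linear-quotients order already available on $H_{\phi,d-1}$; then verify Proposition~\ref{linquot}. The same-block case is fine, and in the cross-block case you correctly identify the variable $x_{dm}$ (for some $p_m\in\mathrm{Max}(\phi_i)\setminus\phi_j$) and the target block $J_{\phi_i\setminus\{p_m\}}$ for the witness. However, your construction of the witness itself is garbled. You say one should ``delete $p_m$ from whatever part $\alpha_t$ it sits in, and re-insert it into the last part $\alpha_{d-1}$,'' producing a multideal of $\phi_i\setminus\{p_m\}$ --- but a poset multideal of $\phi_i\setminus\{p_m\}$ cannot contain $p_m$ in any of its parts, so this is internally inconsistent, and it is precisely the source of the spurious ``special case'' you worry about when $p_m$ already lies in $\gamma_{d-1}$.

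The correct and simpler construction, which is what the paper does, is this: write $u_i=\bigl(\prod_{p_s\notin\phi_i}x_{ds}\bigr)\,u_{\underline{\bm{\gamma}}}$ with $\underline{\bm{\gamma}}=(\gamma_1,\dots,\gamma_{d-1})$ and $p_m\in\gamma_k$, and set
\[
w=\Bigl(\prod_{p_s\notin\phi_i\setminus\{p_m\}}x_{ds}\Bigr)\,u_{\underline{\bm{\gamma}}\setminus\{p_m\}},
\qquad \underline{\bm{\gamma}}\setminus\{p_m\}:=(\gamma_1,\dots,\gamma_k\setminus\{p_m\},\dots,\gamma_{d-1}).
\]
Because $p_m\in\mathrm{Max}(\phi_i)$ (which does follow from $p_m$ being maximal in $\phi_i\setminus\phi_j$, since $\phi_j$ is a poset ideal), $\underline{\bm{\gamma}}\setminus\{p_m\}$ is a valid multideal of degree $d-1$ in $\phi_i\setminus\{p_m\}$, so $w\in G(J_{\phi_i\setminus\{p_m\}})$. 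Compared with $u_i$, the monomial $w$ gains exactly $x_{dm}$ from the enlarged prefactor and loses exactly $x_{km}$; hence $\mathrm{supp}(w)\setminus\mathrm{supp}(u_i)=\{x_{dm}\}$, uniformly for every $k\in\{1,\dots,d-1\}$, so no case distinction is needed. A minor slip: in your case~(ii) the inequality is reversed. The paper avoids any case split here by observing only that $\phi_j\triangleleft\phi_i$ forces $|\phi_j|\le|\phi_i|$ with $\phi_j\neq\phi_i$, whence $\mathrm{Max}(\phi_i)\not\subseteq\phi_j$.
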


\begin{proof}
Assume $\alpha=\psi$. Then $J=J_{\psi}$. This ideal has linear quotients, since $H_{\psi,d-1}$ has linear quotients. 

We may assume $\alpha \neq \psi$. To prove that $J$ has linear quotients, we use Proposition \ref{linquot}. We want to define an ordering $<_J$ of the monomials in $G(J)$. For each $\phi \in \mathbb{I}(P,p_n)$, $J_{\phi}$ has linear quotients with respect to an ordering $<_{\phi}$ of the monomials in $G(J_{\phi})$. 

Let $u,v \in G(J)$. Then $u \in G(J_{\phi_1})$ and $v \in G(J_{\phi_2})$, for some $\phi_1,\phi_2 \in \mathbb{I}(P,p_n)$. We set $u<_J v$ if one of the following conditions is satisfied:

\begin{itemize}
\item[(i)] $\phi_1 \neq \phi_2$ and $\phi_1 \triangleleft \phi_2$;
\item[(ii)] $\phi_1=\phi_2=\phi$ and $u <_{\phi} v$. 
\end{itemize}  

With respect to the above notation, let $u,v \in G(J)$ such that $u <_J v$. 

Assume first $\phi_1=\phi_2=\phi$. Since $J_{\phi}$ has linear quotients, by Proposition \ref{linquot}, there exists a variable $x \in \mathrm{supp}(u) \setminus \mathrm{supp}(v)$ and a monomial $w \in G(J_{\phi})$, with $w <_{\phi} v$ such that $\mathrm{supp}(w) \setminus \mathrm{supp}(v)=\{x\}$. By definition of $<_J$, we have $w <_J v$ and we are done.

Assume now $\phi_1 \neq \phi_2$. We have $\phi_1 \triangleleft \phi_2$. Then there exist $p_j \in \mathrm{Max}(\phi_2) \setminus \phi_1$. In fact assume, by contradiction, that $\mathrm{Max}(\phi_2) \subseteq \phi_1$. It follows that $\phi_2 \subseteq \phi_1$, since $\phi_1$ is a poset ideal of $P$. This is a contradiction, since $|\phi_1| \leq |\phi_2|$ and $\phi_1 \neq \phi_2$. Then $p_j \neq p_n$, because $p_n \in \phi_1$. 

Since $p_j \in \phi_2 \setminus \phi_1$, then $x_{dj} \in \mathrm{supp}(u) \setminus \mathrm{supp}(v)$. By definition of $J_{\phi_2}$, there exists a poset multideal $\underline{\bm{\gamma}}=(\gamma_1,...,\gamma_{d-1})$ of degree $d-1$ in $\phi_2$ such that  $v=\left( \prod_{p_i \notin \phi_2}x_{di} \right)u_{\underline{\bm{\gamma}}}.$ Since $p_j \in \phi_2$, there exists $1 \leq k \leq d-1$ such that $p_j \in \gamma_k$. Consider the monomial $$w=\left( \prod_{p_i \in (P \setminus \phi_2) \cup \{p_j\}} x_{di} \right)u_{\underline{\bm{\gamma}}\setminus \{p_j\}},$$ where $\underline{\bm{\gamma}}\setminus \{p_j\}:=(\gamma_1,...,\gamma_k \setminus \{p_j\},...,\gamma_{d-1})$. Clearly $\phi_2 \setminus \{p_j\}$ is a poset ideal of $P$, because $p_j \in \mathrm{Max}(\phi_2)$ and we have $\phi_2 \setminus \{p_j\} \in \mathbb{I}(P,p_n)$, since $\phi_2 \in \mathbb{I}(P,p_n)$. Moreover $\underline{\bm{\gamma}}\setminus \{p_j\}$ is a poset multideal of degree $d-1$ in $\phi_2 \setminus \{p_j\}$. Then $w \in G(J_{\phi_2 \setminus \{p_j\}})$. Since $|\phi_2 \setminus \{p_j\}|<|\phi_2|$, one has $\phi_2 \setminus \{p_j\} \triangleleft \phi_2$ and $w <_J v$. From the fact that $\mathrm{supp}(w) \setminus \mathrm{supp}(v)=\{x_{dj}\}$, we conclude by Proposition \ref{linquot}.
\end{proof}

In the next lemma, we summarize some important properties of the ideals $J_{\alpha}$ and $H_{\alpha,d}$, for $\alpha \in \mathbb{I}(P,p_n)$. We recall that the intersection $J \cap K$ of two monomial ideals is generated by $\{\mathrm{lcm}(u,v):u \in G(J),v \in G(K)\}$, where $\mathrm{lcm}(u,v)$ denotes the least common multiple of the monomials $u$ and $v$. Clearly $\mathrm{supp}(\mathrm{lcm}(u,v))=\mathrm{supp}(u) \cup \mathrm{supp}(v)$, see \cite{hibi}.

\begin{lemma}\label{inclusion}
Let $P$ be a poset, $p_n \in \mathrm{Max}(P)$, $\alpha, \gamma \in \mathbb{I}(P,p_n)$, $d \geq 2$ an integer. Then 
\begin{enumerate}
\item[{\em (i)}] $J_{\alpha} \subseteq H_{P\setminus \{p_n\},d}$;
\item[{\em (ii)}] $H_{\alpha,d} \cap H_{\gamma,d}=H_{\alpha \cup \gamma,d}$ and, in particular, $H_{\alpha \cup \gamma,d} \subseteq H_{\alpha,d}$; 
\item[{\em (iii)}] $J_{\alpha} \cap J_{\gamma}=\left(\prod_{p_i \in P \setminus (\alpha \cap \gamma)}x_{di}\right)H_{\alpha \cup \gamma,d-1}$.
\end{enumerate} 
\end{lemma}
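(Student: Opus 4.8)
My plan is to reduce all three parts to a single bookkeeping device. Since a poset ideal $\alpha$ of $P$ inherits its order from $P$, its poset ideals are exactly the poset ideals of $P$ contained in it; hence a poset multideal of degree $e$ in a poset ideal $Q$ of $P$ is the same datum as a chain of poset ideals of $P$
$$\emptyset = A_0 \subseteq A_1 \subseteq \cdots \subseteq A_e = Q,$$
with $\alpha_i = A_i \setminus A_{i-1}$, and in that case $u_{\underline{\bm\alpha}} = \prod_{i=1}^{e}\prod_{p_j \in A_i \setminus A_{i-1}} x_{ij}$, which I will denote $u_{A_\bullet}$. The only general facts I will invoke about poset ideals are that unions and intersections of poset ideals are poset ideals, that $Q \setminus \{p_n\}$ is a poset ideal whenever $Q$ is (because $p_n \in \mathrm{Max}(P)$), and the description of a generating set of an intersection of monomial ideals by least common multiples recalled just before the lemma.

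For (ii) I will prove both inclusions in this language. For $H_{\alpha \cup \gamma,d} \subseteq H_{\alpha,d} \cap H_{\gamma,d}$: given a generator $u_{D_\bullet}$ of $H_{\alpha\cup\gamma,d}$ coming from a chain $D_\bullet$ to $\alpha\cup\gamma$, the truncated chain $D_0\cap\alpha \subseteq \cdots \subseteq D_d\cap\alpha$ runs from $\emptyset$ to $\alpha$ through poset ideals of $P$ and hence produces a generator $u_{D_\bullet\cap\alpha}$ of $H_{\alpha,d}$ which divides $u_{D_\bullet}$ because $D_i \cap \alpha \subseteq D_i$; the same with $\gamma$ in place of $\alpha$ shows $u_{D_\bullet}\in H_{\alpha,d}\cap H_{\gamma,d}$. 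For the reverse inclusion, a generator of $H_{\alpha,d} \cap H_{\gamma,d}$ is $\mathrm{lcm}(u_{A_\bullet}, u_{B_\bullet})$ with $A_\bullet$ a chain to $\alpha$ and $B_\bullet$ a chain to $\gamma$; then $D_i := A_i \cup B_i$ is a chain of poset ideals from $\emptyset$ to $\alpha \cup \gamma$ with $D_i \setminus D_{i-1} \subseteq (A_i\setminus A_{i-1}) \cup (B_i \setminus B_{i-1})$, so $u_{D_\bullet}$ divides $\mathrm{lcm}(u_{A_\bullet}, u_{B_\bullet})$, which therefore lies in $H_{\alpha\cup\gamma,d}$. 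This argument never uses $d \ge 2$, so it also yields (ii) for $d = 1$, which I will need in (iii).

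For (i), a generator of $J_\alpha$ has the form $m = \bigl(\prod_{p_i \notin \alpha} x_{di}\bigr)\, u_{C_\bullet}$ for some chain $C_\bullet : \emptyset = C_0 \subseteq \cdots \subseteq C_{d-1} = \alpha$. I put $D_i := C_i \setminus \{p_n\}$ for $0 \le i \le d-1$ and $D_d := P \setminus \{p_n\}$; this is a chain of poset ideals of $P$ from $\emptyset$ to $P\setminus\{p_n\}$, so $u_{D_\bullet} \in G(H_{P\setminus\{p_n\},d})$, and checking the variables row by row — using $D_i \setminus D_{i-1} \subseteq C_i \setminus C_{i-1}$ for $i \le d-1$ and $D_d \setminus D_{d-1} = P \setminus \alpha$, where $p_n \in \alpha$ is exactly what is needed — shows $u_{D_\bullet} \mid m$, hence $m \in H_{P\setminus\{p_n\},d}$. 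For (iii), the generators of $J_\alpha \cap J_\gamma$ are the monomials $\mathrm{lcm}\bigl((\prod_{p_i\notin\alpha}x_{di})u,\,(\prod_{p_i\notin\gamma}x_{di})v\bigr)$ with $u \in G(H_{\alpha,d-1})$ and $v \in G(H_{\gamma,d-1})$; since $u$ and $v$ involve only variables in the first $d-1$ rows while the two products involve only variables in the $d$-th row, this lcm factors as $\bigl(\prod_{p_i \in P \setminus (\alpha\cap\gamma)} x_{di}\bigr)\,\mathrm{lcm}(u,v)$, and by (ii) applied to the integer $d-1$ the monomials $\mathrm{lcm}(u,v)$ generate $H_{\alpha,d-1} \cap H_{\gamma,d-1} = H_{\alpha\cup\gamma,d-1}$, which gives the claimed equality.

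I expect the genuine content to be almost entirely the routine-but-careful verifications that each chain I write down really is an increasing chain of poset ideals of $P$ and that the associated monomials divide in the intended direction; the one spot demanding a little attention is the row-splitting in (iii), which forces the use of (ii) at the integer $d-1$ and is the reason it is worth observing that the proof of (ii) holds for every $d \ge 1$. The degenerate cases governed by Remark \ref{degenerate} (empty posets, $|P|=1$) should be kept in mind, but they do not affect the argument in the generic range.
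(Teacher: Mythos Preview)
Your proof is correct and follows essentially the same approach as the paper: for (ii) the paper also forms the component-wise union $\phi_i=(\alpha_i\cup\gamma_i)\setminus\bigcup_{j<i}(\alpha_j\cup\gamma_j)$ for one inclusion and the component-wise intersection $\phi_i^\alpha=\phi_i\cap\alpha$ for the other, for (i) it also deletes $p_n$ from the layer containing it, and (iii) is deduced identically from (ii) via the row-splitting of the lcm. Your reformulation of poset multideals as chains $\emptyset=A_0\subseteq\cdots\subseteq A_e=Q$ of poset ideals of $P$ is a clean repackaging that makes the poset-ideal verifications (closure under union, intersection, and removal of $p_n$) more transparent than the paper's direct checks, but the underlying constructions coincide.
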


\begin{proof} First we prove (i). Let $u \in G(J_{\alpha}).$ By definition there exists a poset multideal $(\gamma_1,...,\gamma_{d-1})$ of degree $d-1$ in $\alpha$, such that $u=u_{\underline{\bm{\gamma}}}$, where $\underline{\bm{\gamma}}=(\gamma_1,...,\gamma_{d-1},P \setminus \alpha)$. Since $p_n \in \alpha$, then there exists $1 \leq i \leq d-1$ such that $p_n \in \gamma_i.$ Note that $\underline{\bm{\gamma}} \setminus \{p_n\}:=(\gamma_1,...,\gamma_i\setminus \{p_n\},...,\gamma_{d-1},P \setminus {\alpha})$ is a poset multideal of degree $d$ in $P \setminus \{p_n\}$, because $p_n \in \mathrm{Max}(P)$. Hence $u_{\underline{\bm{\gamma}}}=x_{di}u_{\underline{\bm{\gamma}} \setminus \{p_n\}} \in H_{P\setminus \{p_n\},d}.$

Now we prove (ii). We first show the inclusion from left to right. Let $m \in G(H_{\alpha,d} \cap H_{\gamma,d}).$ There exist $\underline{\bm{\alpha}}=(\alpha_1,...,\alpha_d)$ and $\underline{\bm{\gamma}}=(\gamma_1,...,\gamma_d)$  poset multideals of degree $d$ in $\alpha$ and $\gamma$, respectively, such that $m=\mathrm{lcm}(u_{\underline{\bm{\alpha}}},u_{\underline{\bm{\gamma}}})$. Define 

$$\phi_i:=(\alpha_i \cup \gamma_i) \setminus \bigcup_{j=1}^{i-1}(\alpha_j \cup \beta_j), \text{   for $1 \leq i \leq d-1$;       } \text{\quad and \quad} \phi_d:=(\alpha \cup \gamma) \setminus \bigcup_{j=1}^{d-1}\phi_j.$$ 

We claim that $\underline{\bm{\phi}}=(\phi_1,...,\phi_d)$ is a poset multideal of $\alpha \cup \gamma$. Fix $1 \leq i \leq d-1$. Notice that $\phi_i \subseteq (\alpha \cup \gamma) \setminus \bigcup_{j=1}^{i-1}\phi_j$, because $\bigcup_{j=1}^{i-1}\phi_j=\bigcup_{j=1}^{i-1}(\alpha_j \cup \beta_j)$. If $\phi_i=\emptyset$, we have nothing to prove, then we may assume $\phi_i \neq \emptyset$. Let $p \in \phi_i$ and $q \in (\alpha \cup \gamma) \setminus \bigcup_{j=1}^{i-1}\phi_j$, with $q \leq p$. We prove that $q \in \phi_i$. Assume first $p \in \alpha_i \subseteq \alpha \setminus \bigcup_{j=1}^{i-1}\alpha_j$. Since $\alpha$ is a poset ideal of $P$ and $p \in \alpha$, we have $q \in \alpha$. On the other hand, $q \in \alpha \setminus \bigcup_{j=1}^{i-1}\phi_j \subseteq  \alpha \setminus \bigcup_{j=1}^{i-1}\alpha_j$ and $\alpha_i$ is a poset ideal of $ \alpha \setminus \bigcup_{j=1}^{i-1}\alpha_j$, then $q \in \alpha_i$. Hence $q \in \phi_i$. The proof is the same if $p \in \gamma_i$. 

From the fact that $\phi_i \subseteq \alpha_i \cup \gamma_i$, for every $1 \leq i \leq d$, we have that $\mathrm{supp}(u_{\underline{\bm{\phi}}}) \subseteq \mathrm{supp}(m)$. Then $m \in H_{\alpha \cup \gamma,d}$, since $u_{\underline{\bm{\phi}}}$ and $m$ are square-free monomials.

We now prove the reverse inclusion. Let $u_{\underline{\bm{\phi}}} \in G(H_{\alpha \cup \gamma,d}),$ where $\underline{\bm{\phi}}=(\phi_1,...,\phi_d)$ is a poset multideal of degree $d$ in $\alpha \cup \gamma.$ Let $\phi_i^{\alpha}:=\phi_i \cap \alpha$ and $\phi_i^{\gamma}:=\phi_i \cap \gamma$, for $1 \leq i \leq d.$ Clearly $\phi_i=\phi_i^{\alpha} \cup \phi_i^{\gamma}.$ We prove that $(\phi_1^{\alpha},...,\phi_d^{\alpha})$ is a poset multideal of degree $d$ in $\alpha$, proving that $\phi_i^{\alpha}$ is a poset ideal of $\alpha \setminus \cup_{j=1}^{i-1}(\phi_j^{\alpha})$ for every integer $1 \leq i \leq d-1$. Let $p \in \phi_i^{\alpha}$ and $q \in \alpha \setminus \cup_{j=1}^{i-1}(\phi_j^{\alpha})$ such that $q \leq p.$ Since in particular $q \in \alpha \setminus \cup_{j=1}^{i-1}(\phi_j),$ then $q \in \phi_i.$ Since $q \in \alpha$, then $q \in \phi_i^{\alpha}$. The proof for $\gamma$ is the same. Then $u_{\underline{\bm{\phi}}}=\mathrm{lcm}(u_{\phi_i^{\alpha}},u_{\phi_i^{\gamma}}) \in H_{\alpha,d} \cap H_{\gamma,d}.$

Eventually we prove (iii). We have $$J_{\alpha} \cap J_{\gamma}=\mathrm{lcm} \left(\left(\prod_{p_i \in P \setminus \alpha}x_{di}\right),\left(\prod_{p_i \in P \setminus \gamma}x_{di}\right)\right)H_{\alpha,d-1} \cap H_{ \gamma,d-1}.$$ By part (ii) and since $(P \setminus \alpha) \cup (P \setminus \beta)=P \setminus (\alpha \cap \beta)$, the result follows.\end{proof}



\begin{proposition}\label{final}
Let $P=\{p_1,...,p_n\}$ be a poset. Then $$\beta_i \left(\sum_{\alpha \in \mathbb{I}(P,p_n)}J_{\alpha} \right)=\sum_{\alpha \in \mathbb{I}(P,p_n)} \left[\sum_{s=0}^{|\mathrm{Max}(\alpha)|-1}\binom{|\mathrm{Max}(\alpha)|-1}{s}\beta_{i-s}(H_{\alpha,d-1}) \right], \text{ for every } i \geq 0.$$ 
\end{proposition}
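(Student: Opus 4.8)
The plan is to apply Proposition \ref{final1} to the ordered family $\{J_\alpha\}_{\alpha\in\mathbb{I}(P,p_n)}$, where the index set is listed as $\alpha_1\triangleleft\alpha_2\triangleleft\cdots\triangleleft\alpha_t$ according to the total order $\triangleleft$ (so $\alpha_1=\psi$), with common degree $n=|P|$. Each $J_\alpha$ is generated in degree $n$ and has a linear resolution, hence an $n$-linear resolution. First I would check that $G\!\left(\sum_\alpha J_\alpha\right)$ is the disjoint union of the sets $G(J_\alpha)$: a minimal generator of $J_\alpha$ has the form $\big(\prod_{p_i\notin\alpha}x_{di}\big)u_{\underline{\bm{\beta}}}$ with $\underline{\bm{\beta}}\in\mathbb{I}_{d-1}(\alpha)$, so its restriction to the $d$-th block of variables is exactly $\{x_{di}:p_i\notin\alpha\}$; thus $\alpha$ is recovered from any generator of $J_\alpha$, the sets $G(J_\alpha)$ are pairwise disjoint, and since all generators have the same degree $n$ there are no containments. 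Condition (i) of Proposition \ref{final1}, namely that every partial sum $\sum_{k=1}^{j}J_{\alpha_k}=\sum_{\phi\ \underline{\triangleleft}\ \alpha_j}J_\phi$ has an $n$-linear resolution, is precisely Proposition \ref{partialsumlinear}.

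The main work is condition (ii): I claim that for every $\alpha\neq\psi$ in $\mathbb{I}(P,p_n)$,
$$J_\alpha\cap\Big(\sum_{\phi\ \triangleleft\ \alpha}J_\phi\Big)=\N_\alpha J_\alpha,\qquad\text{where}\quad\N_\alpha:=\big(x_{dj}:p_j\in\mathrm{Max}(\alpha)\setminus\{p_n\}\big).$$
Since for monomial ideals intersection distributes over sums, the left-hand side equals $\sum_{\phi\ \triangleleft\ \alpha}(J_\alpha\cap J_\phi)$, and by Lemma \ref{inclusion}(iii) each summand is $\big(\prod_{p_i\in P\setminus(\alpha\cap\phi)}x_{di}\big)H_{\alpha\cup\phi,d-1}$. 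For $\supseteq$ I would use the ideals $\phi=\alpha\setminus\{p_j\}$ for $p_j\in\mathrm{Max}(\alpha)\setminus\{p_n\}$: each such $\phi$ lies in $\mathbb{I}(P,p_n)$ and satisfies $\phi\triangleleft\alpha$ (it is smaller), $\alpha\cap\phi=\phi$ and $\alpha\cup\phi=\alpha$, so $J_\alpha\cap J_\phi=x_{dj}J_\alpha$; summing over these $p_j$ gives $\N_\alpha J_\alpha$. For $\subseteq$, write $P\setminus(\alpha\cap\phi)=(P\setminus\alpha)\sqcup(\alpha\setminus\phi)$ and apply $H_{\alpha\cup\phi,d-1}\subseteq H_{\alpha,d-1}$ (Lemma \ref{inclusion}(ii)) to get $J_\alpha\cap J_\phi\subseteq\big(\prod_{p_i\in\alpha\setminus\phi}x_{di}\big)J_\alpha$; it then suffices to show $\prod_{p_i\in\alpha\setminus\phi}x_{di}\in\N_\alpha$, i.e.\ that $\alpha\setminus\phi$ meets $\mathrm{Max}(\alpha)\setminus\{p_n\}$. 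This last point is the crux. Taking an element $p$ maximal in the subposet $\alpha\setminus\phi$ (which is nonempty, since $\phi\neq\alpha$), if $p$ were not maximal in $\alpha$ there would be $q\in\alpha$ with $q>p$, whence $q\notin\alpha\setminus\phi$ by maximality of $p$, so $q\in\phi$, and then $p\in\phi$ because $\phi$ is a poset ideal — contradicting $p\in\alpha\setminus\phi$. And $p\neq p_n$, since $p_n\in\phi$ for every $\phi\in\mathbb{I}(P,p_n)$. Hence $p\in(\alpha\setminus\phi)\cap(\mathrm{Max}(\alpha)\setminus\{p_n\})$, as needed.

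With the hypotheses of Proposition \ref{final1} verified and $|G(\N_\alpha)|=|\mathrm{Max}(\alpha)|-1$, we obtain
$$\beta_i\Big(\sum_{\alpha\in\mathbb{I}(P,p_n)}J_\alpha\Big)=\beta_i(J_\psi)+\sum_{\alpha\neq\psi}\Bigg[\sum_{s=0}^{|\mathrm{Max}(\alpha)|-1}\binom{|\mathrm{Max}(\alpha)|-1}{s}\beta_{i-s}(J_\alpha)\Bigg].$$
Finally, since $J_\alpha$ is obtained from $H_{\alpha,d-1}$ by multiplying every generator by the single monomial $\prod_{p_i\notin\alpha}x_{di}$, the two ideals are isomorphic as $S$-modules up to a degree shift, so $\beta_i(J_\alpha)=\beta_i(H_{\alpha,d-1})$ for all $i$. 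Moreover $\mathrm{Max}(\psi)=\{p_n\}$, because $\psi=\langle p_n\rangle$ has $p_n$ as its unique maximal element; hence $|\mathrm{Max}(\psi)|-1=0$ and the $\psi$-term is $\beta_i(H_{\psi,d-1})$, which fits the same pattern. Substituting yields exactly the claimed formula. The one step I expect to require the most care is the combinatorial claim that $\alpha\setminus\phi$ always contains a maximal element of $\alpha$ distinct from $p_n$, on which the identification of $\N_\alpha$ — and hence the whole computation — depends.
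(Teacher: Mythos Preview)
Your proposal is correct and follows essentially the same route as the paper: apply Proposition~\ref{final1} to $\{J_\alpha\}_{\alpha\in\mathbb{I}(P,p_n)}$ ordered by $\triangleleft$, invoke Proposition~\ref{partialsumlinear} for condition~(i), and for condition~(ii) identify $J_\alpha\cap\big(\sum_{\phi\triangleleft\alpha}J_\phi\big)$ with $\big(x_{dj}:p_j\in\mathrm{Max}(\alpha)\setminus\{p_n\}\big)J_\alpha$ via Lemma~\ref{inclusion}. The paper packages the same computation through the auxiliary set $R_\alpha=\{\gamma\in\mathbb{I}(P,p_n):\gamma\subseteq\alpha,\ |\gamma|=|\alpha|-1\}$ and proves the key combinatorial point (that $\mathrm{Max}(\alpha)\setminus\phi\neq\emptyset$) by the contrapositive ``$\mathrm{Max}(\alpha)\subseteq\phi\Rightarrow\alpha\subseteq\phi$'', whereas you give the equivalent direct argument by picking $p$ maximal in $\alpha\setminus\phi$; both lead to the same $\N_\alpha$ with $|G(\N_\alpha)|=|\mathrm{Max}(\alpha)|-1$.
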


\begin{proof} To prove the formula, we use Proposition \ref{final1}. We consider the order $\triangleleft$ defined above on the ideals $\{J_{\alpha}\}_{\alpha \in \mathbb{I}(P,p_n)}$.

We recall that the ideals $\{J_{\alpha}\}_{\alpha \in \mathbb{I}(P,p_n)}$ have a linear resolution and that $G(\sum_{\alpha \in \mathbb{I}(P,p_n)}J_{\alpha})$ is the disjoint union of $\{G(J_{\alpha})\}_{\alpha \in \mathbb{I}(P,p_n)}$. By Proposition \ref{partialsumlinear}, the ideals $\sum_{\phi \underline{\triangleleft} \alpha}J_{\phi}$ have a linear resolution, for every $\alpha \in \mathbb{I}(P,p_n)$. Then condition (i) of Proposition \ref{final1} is fulfilled. 

We verify now condition (ii) of Proposition \ref{final1}. Fix $\alpha \in \mathbb{I}(P,p_n)$, with $\alpha \neq \psi$. We define the following set: $$R_{\alpha}:=\{\gamma \in \mathbb{I}(P,p_n):\gamma \subseteq \alpha \text{  and   } |\gamma|=|\alpha|-1\}.$$ We claim that 
\begin{equation}\label{jj}
J_{\alpha} \cap \left(\sum_{\phi \in \mathbb{I}(P,p_n),\phi \triangleleft \alpha}J_{\phi}\right)=J_{\alpha} \cap \left(\sum_{\gamma \in R_{\alpha}}J_{\gamma}\right).
\end{equation}

The inclusion from right to left is clear, since for every $\gamma \in R_{\alpha}$, we have $\gamma \in \mathbb{I}(P,p_n)$, $|\gamma|<|\alpha|$ and then $\gamma \triangleleft \alpha$. 

To prove the other inclusion, let $\phi \in \mathbb{I}(P,p_n)$, such that $\phi \triangleleft \alpha$. Since $|\phi| \leq |\alpha|$ and $\phi \neq \alpha$, then there exists $p_j \in \mathrm{Max}(\alpha) \setminus \phi$ (by the same argument in the proof of Proposition \ref{partialsumlinear}). 

Since $p_n \in \phi$, then $p_{j} \neq p_n.$ Then the poset ideal $\gamma:=\alpha \setminus \{p_{j}\}$ is an element of $\mathbb{I}(P,p_n)$, such that $\gamma \subseteq \alpha$ and $|\gamma|=|\alpha|-1$, i.e. $\gamma \in R_{\alpha}$. Notice that $\alpha \cap \phi \subseteq \gamma$, then $P \setminus (\alpha \cap \phi) \supseteq P \setminus \gamma.$ From this and from Lemma \ref{inclusion} (ii)-(iii), it follows that $$J_{\alpha} \cap J_{\phi}=\left(\prod_{p_i \in P \setminus (\alpha \cap \phi)} x_{di}\right) H_{\alpha \cup \phi,d-1} \subseteq \left(\prod_{p_i \in P \setminus \gamma}x_{di}\right) H_{\alpha,d-1}=J_{\alpha} \cap J_{\gamma}.$$ 



 
So we proved (\ref{jj}). 

For $\alpha \in \mathbb{I}(P,p_n)$ and $\gamma \in R_{\alpha}$, let $p_{j_{\gamma}}$ such that $\gamma=\alpha \setminus \{p_{j_{\gamma}}\}.$ By (\ref{jj}) and Lemma \ref{inclusion} (iii) it follows $$J_{\alpha} \cap \left(\sum_{\phi \in \mathbb{I}(P,p_n),\phi \triangleleft \alpha}J_{\phi}\right)=\sum_{\gamma \in R_{\alpha}}(J_{\alpha} \cap J_{\gamma})=\sum_{\gamma \in R_{\alpha}}\left(\prod_{p_i \in (P \setminus \alpha) \cup \{p_{j_{\gamma}}\}}x_{di}\right)H_{\alpha,d-1}=\left(\sum_{\gamma \in R_{\alpha}}x_{dj_{\gamma}}\right)J_{\alpha}.$$ 

So we proved that condition (ii) in Proposition \ref{final1} is fulfilled. One has $|R_{\alpha}|=|\mathrm{Max}(\alpha)|-1$ by construction. Recall that $\beta_{i}(J_{\alpha})=\beta_i(H_{\alpha,d-1}),$ for $i \geq 0$. Then by Proposition \ref{final1}, for every $i \geq 0$ $$\beta_i \left(\sum_{\alpha \in \mathbb{I}(P,p_n)}J_{\alpha} \right)=\beta_i(H_{\psi,d-1})+\sum_{\alpha \in \mathbb{I}(P,p_n), \alpha \neq \psi} \left[\sum_{s=0}^{|\mathrm{Max}(\alpha)|-1}\binom{|\mathrm{Max}(\alpha)|-1}{s}\beta_{i-s}(H_{\alpha,d-1}) \right].$$ Since the poset ideal $\psi$ is such that $\mathrm{Max}(\psi)=\{p_n\}$, the result follows. \end{proof}

Now we are able to prove Theorem \ref{cmdip}.
\vskip2mm
{\em Proof of Theorem \ref{cmdip}}. We proceed by induction on $n=|P| \geq 1$. For $n=1$, the formula holds by Remark \ref{degenerate}. 

We may assume $n \geq 2$. We prove that $H_{P,d}=x_{dn}H_{P \setminus \{p_n\},d}+\sum_{\alpha \in \mathbb{I}(P,p_n)}J_{\alpha}.$ 

Let $u \in G(H_{P,d})$ and $\underline{\bm{\phi}}=(\phi_1,...,\phi_d)$ be a poset multideal of degree $d$ in $P$ such that $u=u_{\underline{\bm{\phi}}}.$ Assume first $p_n \in \phi_d$, then $x_{dn}$ divides $u_{\underline{\bm{\phi}}}.$ Notice that $\underline{\bm{\phi}} \setminus \{p_n\}=(\phi_1,...,\phi_{d-1},\phi_d \setminus \{p_n\})$ is a poset multideal of degree $d$ in $P \setminus \{p_n\}$ and $u=x_{dn}u_{\underline{\bm{\phi}} \setminus \{p_n\}} \in x_{dn}H_{P \setminus \{p_n\},d}$. 

Assume $p_n \in \phi_i$, for $1 \leq i<d.$ Let $\alpha:=\bigcup_{j=1}^{d-1}\phi_j,$ then $\alpha \in \mathbb{I}(P,p_n).$ Hence $(\phi_1,...,\phi_{d-1})$ is a poset multideal of degree $d-1$ in $\alpha$ and $u_{\underline{\bm{\phi}}} \in J_{\alpha}.$ 

Conversely, let $u \in G(x_{dn}H_{P \setminus \{p_n\},d})$. Let $\underline{\bm{\delta}}=(\delta_1,...,\delta_d)$ be a poset multideal of degree $d$ in $P \setminus \{p_n\}$ such that $u=x_{dn}u_{\underline{\bm{\delta}}}.$ Since $(\delta_1,...,\delta_d \cup \{p_n\})$ is a poset multideal of degree $d$ in $P,$ one has $u \in H_{P,d}.$ Let $u \in G(J_{\alpha}),$ such that $\alpha \in \mathbb{I}(P,p_n).$ Let $\underline{\bm{\gamma}}=(\gamma_1,...,\gamma_{d-1})$ a poset multideal of degree $d-1$ in $\alpha$ such that $u=\left( \prod_{p_i \notin \alpha}x_{di} \right)u_{\underline{\bm{\gamma}}}$. One has $u \in G(H_{P,d})$ since $(\gamma_1,...,\gamma_{d-1},P \setminus \alpha)$ is a poset multideal of degree $d$ in $P.$ 

Recall that the ideal $H_{P \setminus \{p_n\},d}$ has a linear resolution. Clearly $G(H_{P,d})$ is the disjoint union of $G(x_{dn}H_{P \setminus \{p_n\},d})$ and $G(\sum_{\alpha \in \mathbb{I}(P,p_n)}J_{\alpha})$. By Proposition \ref{urur} (ii), the splitting above is an $x_{dn}$-splitting. Notice that $x_{dn}H_{P \setminus \{p_n\},d}$ has a linear resolution and $\beta_{i}(x_{dn}H_{P \setminus \{p_n\},d})=\beta_{i}(H_{P \setminus \{p_n\},d}),$ for every $i \geq 0$. By Lemma \ref{inclusion} (i) $$x_{dn}H_{P \setminus \{p_n\},d} \cap \left(\sum_{\alpha \in \mathbb{I}(P,p_n)}J_{\alpha}\right)\!=\!\!\sum_{\alpha \in \mathbb{I}(P,p_n)}x_{dn}(H_{P \setminus \{p_n\},d} \cap J_{\alpha})=\!\!\sum_{\alpha \in \mathbb{I}(P,p_n)}x_{dn}J_{\alpha}=x_{dn} \left(\sum_{\alpha \in \mathbb{I}(P,p_n)}J_{\alpha} \right).$$ Then  
\begin{equation}\label{ok}
\beta_i(H_{P,d})=\beta_i(H_{P \setminus \{p_n\},d})+\beta_i \left( \sum_{\alpha \in \mathbb{I}(P,p_n)}J_{\alpha} \right)+\beta_{i-1}\left( \sum_{\alpha \in \mathbb{I}(P,p_n)}J_{\alpha} \right).
\end{equation} 

By Proposition \ref{final} one has, for every $i \geq 0$, $$\beta_i \left( \sum_{\alpha \in \mathbb{I}(P,p_n)}J_{\alpha} \right)+\beta_{i-1}\left( \sum_{\alpha \in \mathbb{I}(P,p_n)}J_{\alpha} \right)=\sum_{\alpha \in \mathbb{I}(P,p_n)} \left[\sum_{s=0}^{|\mathrm{Max}(\alpha)|}\binom{|\mathrm{Max}(\alpha)|}{s}\beta_{i-s}(H_{\alpha,d-1}) \right].$$ Notice that $\mathbb{I}(P)=\mathbb{I}(P \setminus \{p_n\}) \cup \mathbb{I}(P,p_n)$. By induction we conclude. \qed

\vspace{2mm}

\begin{remark}
\em For $d=2$, it can be easily proved that this result is equivalent to \cite[Theorem 3.8]{ur} (see equation (\ref{ok})).
\end{remark}

\begin{remark}
\em Notice that the Betti numbers $\beta_i(H_{P,d})$ do not depend on the characteristic of the base field $\mathbbm{k}$.
\end{remark}

The splitting formula can be simplified if $P$ has a unique maximal element.

\begin{corollary}\label{simpler}
Let $P$ be a poset with a unique maximal element $p_n$ and let $d \geq 2$ be an integer. Then $$\beta_i(H_{P,d})=\beta_i(H_{P \setminus \{p_n\},d})+\beta_i(H_{P,d-1})+\beta_{i-1}(H_{P,d-1}), \text{ for each } i \geq 0.$$
\end{corollary}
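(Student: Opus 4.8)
The plan is to derive Corollary \ref{simpler} as a specialization of Theorem \ref{cmdip}, using crucially the hypothesis that $P$ has a unique maximal element $p_n$. The starting point is equation (\ref{ok}) from the proof of Theorem \ref{cmdip}, namely
\[
\beta_i(H_{P,d})=\beta_i(H_{P\setminus\{p_n\},d})+\beta_i\!\left(\sum_{\alpha\in\mathbb{I}(P,p_n)}J_\alpha\right)+\beta_{i-1}\!\left(\sum_{\alpha\in\mathbb{I}(P,p_n)}J_\alpha\right).
\]
So it suffices to show that, under the unique-maximal-element hypothesis, $\sum_{\alpha\in\mathbb{I}(P,p_n)}J_\alpha$ is essentially $H_{P,d-1}$, up to the fact that the ambient polynomial ring has the extra variables $x_{d1},\dots,x_{dn}$. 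Concretely I would prove the identity $\sum_{\alpha\in\mathbb{I}(P,p_n)}J_\alpha=\left(\prod_{\text{none}}\right)H_{P,d-1}$ — more precisely, I expect that when $p_n$ is the unique maximal element we get $\mathbb{I}(P,p_n)=\{P\}$, since the only poset ideal containing $p_n$ is $P$ itself. Indeed if $\alpha$ is a poset ideal with $p_n\in\alpha$, then for every $q\in P$ there is a chain from $q$ up to some maximal element, which must be $p_n$; hence $q\le p_n$, so $q\in\alpha$, giving $\alpha=P$.

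Once $\mathbb{I}(P,p_n)=\{P\}$ is established, the sum collapses to the single ideal $J_P=\left(\prod_{p_i\notin P}x_{di}\right)H_{P,d-1}=H_{P,d-1}$ (the empty product being $1$), where $H_{P,d-1}$ now sits inside the larger ring $T=\mathbbm{k}[x_{ij}:1\le i\le d]$ rather than its natural subring on $d-1$ blocks of variables. Since adjoining polynomial variables does not change Betti numbers (a free resolution over the smaller ring remains a free resolution after base change along the flat map), we have $\beta_i(J_P)=\beta_i(H_{P,d-1})$ for all $i\ge 0$. Substituting into (\ref{ok}) yields exactly
\[
\beta_i(H_{P,d})=\beta_i(H_{P\setminus\{p_n\},d})+\beta_i(H_{P,d-1})+\beta_{i-1}(H_{P,d-1}),
\]
which is the claim.

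Alternatively, and perhaps more cleanly for the write-up, I could read the result straight off the statement of Theorem \ref{cmdip}: the outer sum ranges over $\alpha\in\mathbb{I}(P)=\mathbb{I}(P\setminus\{p_n\})\cup\mathbb{I}(P,p_n)$. For $\alpha\in\mathbb{I}(P\setminus\{p_n\})$ these terms reassemble, by the same theorem applied to $P\setminus\{p_n\}$, into $\beta_i(H_{P\setminus\{p_n\},d})$; and by the argument above the only $\alpha\in\mathbb{I}(P,p_n)$ is $\alpha=P$, which has $\mathrm{Max}(P)=\{p_n\}$, so $|\mathrm{Max}(\alpha)|=1$ and its contribution is $\sum_{h=0}^{1}\binom{1}{h}\beta_{i-h}(H_{P,d-1})=\beta_i(H_{P,d-1})+\beta_{i-1}(H_{P,d-1})$. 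Adding these gives the formula.

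The only genuinely non-routine point is the combinatorial observation that a unique maximal element forces $\mathbb{I}(P,p_n)=\{P\}$; everything else is bookkeeping with (\ref{ok}) (or the displayed formula of Theorem \ref{cmdip}) plus the invariance of Betti numbers under a polynomial extension of the ground ring. I would state the former as a one-line lemma inside the proof and cite the latter as standard. I do not anticipate any real obstacle; the main care needed is to make sure the decomposition $\mathbb{I}(P)=\mathbb{I}(P\setminus\{p_n\})\sqcup\mathbb{I}(P,p_n)$ is invoked consistently with whichever of the two routes above is chosen, and that the empty-product normalization $J_P=H_{P,d-1}$ is flagged explicitly.
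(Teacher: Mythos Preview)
Your proposal is correct and matches the paper's own argument: the paper's proof simply notes that $P=\langle p_n\rangle$ so that $\mathbb{I}(P,p_n)=\{P\}$, and then invokes equation~(\ref{ok}) to conclude. Your write-up is more explicit about why $\mathbb{I}(P,p_n)=\{P\}$ and about the identification $J_P=H_{P,d-1}$, but the route is identical.
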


\begin{proof} By assumption $P=\langle p_n \rangle$, where $p_n$ is the unique element in $\mathbb{I}(P,p_n)$. The splitting formula follows immediately by equation (\ref{ok}) in the proof of Theorem \ref{cmdip}.  \end{proof} 




Let $G$ be a Cohen-Macaulay bipartite graph. By \cite[Theorem 3.4]{hh2}, there exists a poset $P$ such that $\mathcal{C}_{P,2}=G$. By using Theorem \ref{cmdip}, we are able to give another formulation of \cite[Theorem 3.8]{ur}. 


\begin{corollary}\label{cmbip}
Let $G$ be a Cohen-Macaulay bipartite graph and $P$ be the poset such that $\mathcal{C}_{P,2}=G$. Then $$\beta_i(I(G)^*)=\sum_{\alpha \in \mathbb{I}(P)}\binom{|\mathrm{Max}(\alpha)|}{i}.$$
\end{corollary}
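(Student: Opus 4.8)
The plan is to derive Corollary~\ref{cmbip} from Theorem~\ref{cmdip} by specializing to $d=2$ and computing the base case $\beta_i(H_{\alpha,1})$. First I would recall from Remark~\ref{degenerate}~(iii) that for any nonempty poset $\alpha$ the ideal $H_{\alpha,1}$ is principal, so $\beta_0(H_{\alpha,1})=1$ and $\beta_j(H_{\alpha,1})=0$ for $j\ge 1$. Plugging $d=2$ into the formula of Theorem~\ref{cmdip} gives
\[
\beta_i(I(G)^*)=\beta_i(H_{P,2})=\sum_{\alpha\in\mathbb{I}(P)}\left[\sum_{h=0}^{|\mathrm{Max}(\alpha)|}\binom{|\mathrm{Max}(\alpha)|}{h}\beta_{i-h}(H_{\alpha,1})\right].
\]
In the inner sum, the only surviving term is the one with $i-h=0$, i.e.\ $h=i$, which contributes $\binom{|\mathrm{Max}(\alpha)|}{i}$ precisely when $0\le i\le|\mathrm{Max}(\alpha)|$ (and $0$ otherwise, consistently with the binomial coefficient convention). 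Hence the inner bracket equals $\binom{|\mathrm{Max}(\alpha)|}{i}$ for every $\alpha\in\mathbb{I}(P)$, and summing over $\alpha$ yields the claimed identity.

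A small bookkeeping point I would address: the sum in Theorem~\ref{cmdip} ranges over \emph{all} poset ideals $\alpha\in\mathbb{I}(P)$, including $\alpha=\emptyset$. For $\alpha=\emptyset$ we have $H_{\emptyset,1}=0$ by Remark~\ref{degenerate}~(i), so all its Betti numbers vanish and this term contributes $0$; on the other side $\binom{|\mathrm{Max}(\emptyset)|}{i}=\binom{0}{i}$, which is $0$ for $i\ge 1$ and $1$ for $i=0$. To keep the formula clean I would note that either one restricts to nonempty poset ideals throughout (as is natural here, since $\beta_0(I(G)^*)$ counts the minimal vertex covers, i.e.\ the poset ideals of $P$, and the empty ideal corresponds to no generator), or one simply observes that including $\alpha=\emptyset$ changes nothing for $i\ge 1$ and I would state the corollary for $i\ge 0$ with the understanding that the empty poset ideal is excluded from $\mathbb{I}(P)$ in this context, matching the convention used implicitly in the rest of the paper.

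Finally I would remark, as the excerpt already announces, that this recovers \cite[Theorem 3.8]{ur} in a different guise, and that the formula makes transparent the combinatorial meaning of the resolution: the $i$-th Betti number of the cover ideal of the Cohen-Macaulay bipartite graph $G$ counts pairs $(\alpha,M)$ where $\alpha$ is a poset ideal of $P$ and $M$ is an $i$-element subset of $\mathrm{Max}(\alpha)$. There is no real obstacle in this argument; the only thing requiring a moment's care is the interaction with the degenerate cases of Remark~\ref{degenerate}, which is why I would handle the $\alpha=\emptyset$ convention explicitly before concluding.
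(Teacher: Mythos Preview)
Your proposal is correct and follows exactly the same route as the paper: the paper's proof consists of the single sentence ``The formula follows immediately by Theorem~\ref{cmdip} with $d=2$ and Remark~\ref{degenerate},'' which is precisely what you do. Your additional bookkeeping paragraph on the $\alpha=\emptyset$ term is a legitimate observation that the paper passes over in silence; it does not change the argument but it is worth noting, as you do, that the formula is literally consistent only for $i\ge 1$ (or with $\emptyset$ excluded from $\mathbb{I}(P)$).
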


\begin{proof} 
The formula follows immediately by Theorem \ref{cmdip} with $d=2$ and Remark \ref{degenerate}.
\end{proof}



\section{Betti splitting for Cohen-Macaulay simplicial complexes}\label{examples}
In this section we present some interesting examples about Betti splittings of Alexander dual ideals of Cohen Macaulay simplicial complexes. By \cite[Theorem 3]{eagrein} such ideals have a linear resolution.  For more definitions about simplicial complexes, their properties and the Stanley-Reisner correspondence we refer to \cite[Chapter 1]{hibi} and \cite[Chapter 3]{jonsson}.

\begin{definition}
\em An {\em abstract simplicial complex} $\Delta$ on $n$ vertices is a collection of subsets of $\{1,\dots,n\}$, called {\em faces}, such that if $F \in \Delta$, $G \subseteq F$, then $G \in \Delta$.
\end{definition} 

A {\em facet} is a maximal face of $\Delta$ with respect to the inclusion of sets. Denote by $\F(\Delta)$ the collection of facets of $\Delta$. A simplicial complex $\Delta$ is called {\em pure} if all its facets have the same cardinality.
\vskip 2mm
The {\em Alexander dual ideal} $I_{\Delta}^*$ of $\Delta$ is defined by 
\vskip 1.5mm
\begin{center}
$I_{\Delta}^*=(x_{\overline{F}}:F \in \F(\Delta))$, where $\overline{F}:=\{1,\dots,n\} \setminus F$ and $x_{\overline{F}}=\prod_{i \in \overline{F}} x_i$.
\end{center}

We briefly recall the definitions of {\em link} and {\em deletion} of a vertex $i$ of $\Delta$. $$link_{\Delta}(i):=\{F \in \Delta:i \notin F,F \cup \{i\} \in \Delta\}.$$ $$del_{\Delta}(i):=\{F \in \Delta:i \notin F\}.$$

In the following example we show an ideal with a linear resolution that does not admit {\em any} Betti splitting. In this example we also point out that the existence of Betti splittings of $I_{\Delta}$ and $I_{\Delta}^*$ are completely unrelated. 

\begin{ex}\label{dunce}{\em (Dunce hat)}
\em Let $\Delta$ be the two-dimensional simplicial complex in Figure 2. It is a triangulation of the {\em dunce hat} introduced by Zeeman \cite{zee}. 

\begin{figure}[ht!]\centering
\includegraphics[width=60mm]{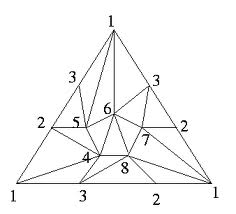}
\caption{A triangulation of the dunce hat.}
\end{figure}

It is contractible but not collapsible (see \cite{jonsson}). Moreover $\Delta$ is Cohen-Macaulay and the graded Betti numbers of $I_{\Delta}^*$ are given by the following linear resolution: $$0 \rightarrow R(-7)^{11} \rightarrow R(-6)^{27} \rightarrow R(-5)^{17} \rightarrow I_{\Delta}^*\text{}.$$ 
In particular $I_{\Delta}^*$ has $17$ minimal generators of degree $5$.

Using CoCoA \cite{cocoa} or Macaulay 2 \cite{mac2}, it can be shown that for {\em every} monomial ideals $J,K$ such that $I_{\Delta}^*=J+K,$ being $G(I_{\Delta}^*)$ is the disjoint union of $G(J)$ and $G(K)$, the decomposition $I_{\Delta}^*=J+K$ is {\em not} a Betti splitting of $I_{\Delta}^*$. This example answers \cite[Question 4.3]{ur}, showing that, in general, there is no relation between the existence of Betti splitting for a square-free monomial ideal and the characteristic dependence of its graded Betti numbers, since in this case the ideal $I_{\Delta}^*$ has characteristic-independent resolution.


Nevertheless the Stanley-Reisner ideal of $\Delta$ admits a Betti splitting $I_{\Delta}=J+K$, where 

$J=(x_2x_6, x_4x_7, x_2x_4x_8, x_3x_4x_5, x_2x_3x_4, x_2x_7x_8,x_3x_4x_6,x_1x_4x_6,x_3x_5x_6)$ and 

\noindent $K=(x_5x_7, x_5x_8,x_1x_2x_5,x_1x_4x_5, x_1x_6x_8, x_1x_6x_7, x_1x_3x_8, x_1x_4x_8, x_1x_3x_7,x_3x_6x_8,x_1x_2x_3,x_3x_7x_8).$

In fact the graded Betti numbers of $I_{\Delta}$ are given by $$0 \rightarrow R(-7)^{11} \rightarrow R(-6)^{50} \rightarrow R(-5)^{86} \rightarrow R(-3)^2 \oplus R(-4)^{65} \rightarrow R(-2)^4 \oplus R(-3)^{17} \rightarrow I_{\Delta},$$ and the graded Betti numbers of $J,K$ and $J \cap K$ are given by
\begin{center}
$0 \rightarrow R(-6)^3 \rightarrow R(-5)^{13} \rightarrow R(-4)^{18} \rightarrow R(-2)^2 \oplus R(-3)^7 \rightarrow J$\\
\vskip 0.3cm
$0 \rightarrow R(-6)^5 \rightarrow R(-5)^{19} \rightarrow R(-3) \oplus R(-4)^{24} \rightarrow R(-2)^2 \oplus R(-3)^{10} \rightarrow K$\\
\vskip 0.3cm
$0 \rightarrow R(-7)^{11} \rightarrow R(-6)^{42} \rightarrow R(-5)^{54} \rightarrow R(-3) \oplus R(-4)^{23} \rightarrow J \cap K$.
\end{center}
\end{ex}


We finally prove that a condition introduced by Nagel and R\"{o}mer in \cite{nagel} ensures the existence of $x_i$-splitting for the Alexander dual of a Cohen-Macaulay simplicial complex.

\begin{definition}{\em (\cite[Definition 3.1]{nagel})}
\em A pure non-empty simplicial complex $\Delta$ on $n$ vertices is called {\em weakly vertex decomposable} (WVD) if there exists a vertex $v$ of $\Delta$ fulfilling exactly one of the following conditions:
\begin{itemize}
\item[(i)] $\mathrm{del}_{\Delta}(v)$ is WVD and $\Delta$ is a cone over $\mathrm{del}_{\Delta}(v)$ with apex $v$;
\item[(ii)] $\mathrm{link}_{\Delta}(v)$ is WVD and $\mathrm{del}_{\Delta}(v)$ is Cohen-Macaulay of the same dimension of $\Delta$.
\end{itemize}
\end{definition}

Notice that Provan and Billera \cite{prbill} use the same name for a different class of complexes. 

Every weakly vertex decomposable simplicial complex is Cohen-Macaulay and every pure vertex decomposable simplicial complex is weakly vertex decomposable, see \cite{nagel}.

Let $\Delta$ be a vertex decomposable complex. By \cite[Theorem 2.8, Corollary 2.11]{moradi}, the ideal $I_{\Delta}^*$ admits $x_i$-splitting, for some $x_i$. Weakly vertex decomposability of $\Delta$  is a sufficient condition for the existence of an $x_i$-splitting of $I_{\Delta}^*$, for a suitable $x_i$, generalizing the result cited above.

\begin{proposition}\label{weakVD}
Let $\Delta$ be a weakly vertex decomposable simplicial complex. Then there exists a vertex $i \in \Delta$ such that $I_{\Delta}^*$ admits $x_i$-splitting.
\end{proposition}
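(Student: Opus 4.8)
The plan is to induct on the number of vertices $n$ of $\Delta$, using the two cases in the definition of weakly vertex decomposable, and to show that in each case the Alexander dual ideal $I_\Delta^*$ admits an $x_i$-splitting for a suitable vertex $i$. The base case ($n$ small, e.g. a simplex) is immediate, since then $I_\Delta^*$ is generated by a single variable (or is zero) and trivially splits. So fix a vertex $v$ of $\Delta$ realizing one of conditions (i) or (ii) of the WVD definition, and analyze the two cases separately.

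First I would record the standard dictionary between Alexander duality and the two operations $\mathrm{del}_\Delta(v)$ and $\mathrm{link}_\Delta(v)$. Writing $x = x_v$ for the variable corresponding to $v$, one has, after passing to the appropriate sub-polynomial rings, a decomposition of $I_\Delta^*$ into the part of the generators divisible by $x$ and the part not divisible by $x$; concretely, the generators $x_{\overline F}$ with $v \notin F$ correspond to facets of $\mathrm{del}_\Delta(v)$, while the generators with $v \in F$ correspond (after dividing by the contribution of $v$) to facets of $\mathrm{link}_\Delta(v)$. So the natural $x$-partition is $I_\Delta^* = x\,(I_{\mathrm{link}_\Delta(v)}^*) + I_{\mathrm{del}_\Delta(v)}^*$, with the ambient rings matched up carefully (the "missing" vertices of each subcomplex just add dummy variables). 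The point is then to verify, in each of the two WVD cases, that this partition is actually a Betti splitting.

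In case (i), $\Delta$ is a cone over $\mathrm{del}_\Delta(v)$ with apex $v$. Then $v$ lies in every facet, so no generator of $I_\Delta^*$ is divisible by $x_v$; equivalently $\mathrm{del}_\Delta(v)$ has a different structure, and in fact $I_\Delta^*$ "is" $I_{\mathrm{del}_\Delta(v)}^*$ up to renaming, so the desired $x_i$-splitting is supplied directly by the inductive hypothesis applied to $\mathrm{del}_\Delta(v)$ (which is WVD by assumption), for a vertex $i \neq v$. In case (ii), $\mathrm{del}_\Delta(v)$ is Cohen–Macaulay of the same dimension as $\Delta$, so $I_{\mathrm{del}_\Delta(v)}^*$ has a linear resolution by \cite[Theorem 3]{eagrein}; and $\mathrm{link}_\Delta(v)$ is WVD, hence Cohen–Macaulay, so $I_{\mathrm{link}_\Delta(v)}^*$ also has a linear resolution, and these two linear resolutions are in the \emph{same} degree precisely because $\mathrm{del}_\Delta(v)$ and $\Delta$ have equal dimension (the degree of a generator of $I_\Sigma^*$ is $n - \dim\Sigma - 1$, suitably interpreted). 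Then $J := I_{\mathrm{del}_\Delta(v)}^*$ has a linear resolution and $I_\Delta^* = x_v J' + J$ is an $x_v$-partition with $J' = I_{\mathrm{link}_\Delta(v)}^*$ also linear; by Proposition \ref{urur}(ii) this is an $x_v$-splitting, and we take $i = v$.

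The main obstacle I expect is bookkeeping around the ambient rings and the precise identification of the generators: one must be careful that $I_{\mathrm{del}_\Delta(v)}^*$ and $I_{\mathrm{link}_\Delta(v)}^*$, computed in their own vertex sets, embed into the polynomial ring of $\Delta$ in a way that makes the complementation $\overline F$ and the degrees match up — in particular that vertices of $\Delta$ that survive in neither (or in only one) subcomplex contribute the expected variables to the monomials $x_{\overline F}$. Getting the degree of the linear resolution of $I_{\mathrm{del}_\Delta(v)}^*$ to agree with that of $I_\Delta^*$ is exactly where the hypothesis "$\mathrm{del}_\Delta(v)$ Cohen–Macaulay \emph{of the same dimension}" in case (ii) is used, and this is the one place where a genuine (if short) dimension count is needed rather than a formal manipulation. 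Everything else reduces to the standard facts already cited: Proposition \ref{urur}(ii) for the splitting criterion and \cite[Theorem 3]{eagrein} for linearity of the resolution.
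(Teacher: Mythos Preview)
Your approach is essentially the paper's, but you have swapped the roles of link and deletion in the key decomposition, and this is not a harmless slip. Your own preceding sentence is correct: the generators $x_{\overline F}$ with $v\notin F$ (hence $v\in\overline F$) are precisely the ones \emph{divisible} by $x_v$, and they correspond to facets of $\mathrm{del}_\Delta(v)$; the generators with $v\in F$ are \emph{not} divisible by $x_v$ and correspond to facets of $\mathrm{link}_\Delta(v)$. So the $x_v$-partition reads
\[
I_\Delta^* \;=\; x_v\, I_{\mathrm{del}_\Delta(v)}^* \;+\; I_{\mathrm{link}_\Delta(v)}^*,
\]
not the formula you wrote. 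This is \cite[Lemma 2.2]{moradi}, which the paper invokes; note that it requires $v$ to be a shedding vertex, i.e.\ that every facet of $\mathrm{del}_\Delta(v)$ is already a facet of $\Delta$, and this is exactly what the hypothesis ``$\mathrm{del}_\Delta(v)$ Cohen--Macaulay of the same dimension as $\Delta$'' buys you (not merely the degree-matching you mention).

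With the correct decomposition the argument is shorter than yours: to apply Proposition~\ref{urur}(ii) you need only that the coefficient ideal $I_{\mathrm{del}_\Delta(v)}^*$ has a linear resolution, which follows from $\mathrm{del}_\Delta(v)$ being Cohen--Macaulay via \cite[Theorem~3]{eagrein}. The WVD hypothesis on $\mathrm{link}_\Delta(v)$ is never used at this stage, so your verification that $I_{\mathrm{link}_\Delta(v)}^*$ is linear, while true, is unnecessary. Apart from the swap, your overall structure --- handle cones by passing to $\mathrm{del}_\Delta(v)$ (implicit induction), and in case~(ii) take $i=v$ --- matches the paper's proof.
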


\begin{proof} If $\Delta$ is a cone over $\mathrm{del}_{\Delta}(i)$ for some $i$, then $I_{\Delta}^*$ and $I_{\mathrm{del}_{\Delta}(i)}^*$ have the same graded Betti numbers. Then we may assume that $\Delta$ is not a cone. Let $i$ be the vertex in the definition of weakly vertex decomposable simplicial complex. Since $\mathrm{del}_{\Delta}(i)$ is Cohen-Macaulay, then it is pure and $i$ is a shedding vertex (see for instance \cite{moradi}). By \cite[Lemma 2.2]{moradi}, we have $I_{\Delta}^*=x_iI_{\mathrm{del}_{\Delta}(i)}^*+I_{\mathrm{link}_{\Delta}(i)}^*$. Recall again that $\mathrm{del}_{\Delta}(i)$ is Cohen-Macaulay. Then, by \cite[Theorem 3]{eagrein}, $I_{\mathrm{del}_{\Delta}(i)}^*$ has a linear resolution and the result follows by Proposition \ref{urur} (ii). \end{proof} 

In the following example we show that the converse of the previous proposition does not hold, even in the Cohen-Macaulay case.

\begin{ex}\label{hach}{\em (Hachimori's example)}
\em Let $\Delta$ be the simplicial complex in Figure 3, due to Hachimori \cite{hachi}. It is shellable, hence Cohen-Macaulay., but not weakly vertex decomposable. In fact $\Delta$ is not a cone and $\mathrm{del}_{\Delta}(i)$ is not Cohen-Macaulay for $1 \leq i \leq 7$.

\begin{figure}[ht!]\centering
\includegraphics[width=60mm]{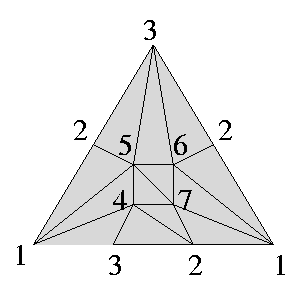}
\caption{Hachimori's example.}
\end{figure} 

The graded Betti numbers of $I_{\Delta}^*$ are given by $$0 \rightarrow R(-6)^8 \rightarrow R(-5)^{20} \rightarrow R(-4)^{13} \rightarrow I_{\Delta}^*.$$ The ideal $I_{\Delta}^*$ admits $x_4$-splitting. In fact $I_{\Delta}^*=x_4J+K$ is a Betti splitting, where 
\begin{center}
$x_4J=(x_3x_4x_5x_7, x_2x_3x_4x_5, x_3x_4x_5x_6,x_1x_2x_3x_4, x_1x_4x_5x_7, x_1x_2x_4x_7, x_1x_4x_6x_7, x_3x_4x_6x_7)$ and\\
\vskip 0.3cm
$K=(x_1x_5x_6x_7, x_1x_3x_5x_6, x_1x_2x_3x_6, x_2x_3x_6x_7, x_2x_5x_6x_7)$. 
\end{center}
The graded Betti numbers of $J,K$ and $J \cap K$ are given by
\begin{center}
$0 \rightarrow R(-6)^4 \rightarrow R(-5)^{11} \rightarrow R(-4)^8 \rightarrow J$\\
\vskip 0.3cm
$0 \rightarrow R(-6) \rightarrow R(-5)^5 \rightarrow R(-4)^5 \rightarrow K$\\
\vskip 0.3cm
$0 \rightarrow R(-6)^3 \rightarrow R(-5)^4 \rightarrow J \cap K$.
\end{center}
\end{ex}

The next definition is important to understand the next example. For further details see \cite{eng} and \cite{jonsson}.

\begin{definition}
\em A $d$-dimensional simplicial complex $\Delta$ is called {\em non evasive} if either
\begin{itemize}
\item[(i)] $\Delta$ is a simplex or
\item[(ii)] $d \geq 1$ and there exists a vertex $v$ of $\Delta$ such that $\mathrm{link}_{\Delta}(v)$ and $\mathrm{del}_{\Delta}(v)$ are both non evasive.
\end{itemize}
\end{definition}


By \cite[Theorem 6.2]{bruno}, every vertex decomposable ball is non evasive. In the next example we present a non evasive ball $\Delta$ such that $I_{\Delta}^*$ does not admits $x_i$-splitting, showing that \cite[Theorem 2.8, Corollary 2.11]{moradi} does not hold for the Alexander dual ideal of a non evasive simplicial complex. 

\begin{ex}\label{rud}{\em (Rudin's ball)}
\em Let $\Delta$ be the {\em Rudin's ball}, a non-shellable triangulation of the tetrahedron with $14$-vertex introduced in \cite{rudin}. Since $\Delta$ is a ball, it is Cohen-Macaulay and $I_{\Delta}^*$ has a $10$-linear resolution: $$0 \rightarrow R(-12)^{30} \rightarrow R(-11)^{70} \rightarrow R(-10)^{41} \rightarrow I_{\Delta}^*.$$ By \cite[Theorem 6.3]{bruno}, $\Delta$ is non evasive. It is not difficult to show that $I_{\Delta}^*$ does not admit $i$-splitting, for $1 \leq x_i \leq 14$. Nevertheless $I_{\Delta}^*=I_{\Delta_1}^*+I_{\Delta_2}^*$ is a Betti splitting of $I_{\Delta}^*$, where $\Delta_1$ and $\Delta_2$ are given by the following facets:
\vskip 4mm
$\mathcal{F}(\Delta_1)\!\!=\!\!\{$\{1,3,7,13\},\{1,3,9,13\},\{1,5,7,11\},\{1,5,9,11\},\{1,7,11,13\},\{1,9,11,13\},\{3,4,7,11\},

\hspace{0.5cm}\{3,4,7,12\},\{3,7,11,14\},\{3,7,12,13\},\{3,9,12,13\},\{4,7,11,12\},\{4,8,11,12\},\{5,6,9,13\},

\hspace{0.5cm}\{5,6,9,14\},\{5,7,11,14\},\{5,9,11,14\},\{5,9,12,13\},\{6,9,13,14\},\{6,10,13,14\},

\hspace{0.5cm}\{7,11,12,13\},\{9,11,13,14\},\{11,12,13,14\}$\}$;
\vskip 2mm
$\mathcal{F}(\Delta_2)\!\!=\!\!\{$\{2,4,8,14\},\{2,4,10,14\},\{2,6,8,12\},\{2,6,10,12\},\{2,8,12,14\},\{2,10,12,14\},\{3,6,10,11\},

\hspace{0.5cm}\{3,6,10,14\},\{3,10,11,14\},\{4,5,8,12\},\{4,5,8,13\},\{4,8,13,14\},\{4,10,13,14\},\{5,8,12,13\},

\hspace{0.5cm}\{6,8,11,12\},\{6,10,11,12\},\{8,12,13,14\},\{10,11,12,14\}$\}$.\\

Both $I_{\Delta_1}^*$ and $I_{\Delta_2}^*$ are computed in $\mathbbm{k}[x_1,\dots,x_{14}]$. The minimal graded free resolutions of $I_{\Delta_1}^*$, $I_{\Delta_2}^*$ and $I_{\Delta_1}^* \cap I_{\Delta_2}^*$ are 
\begin{center}
$0 \rightarrow R(-12)^{13} \rightarrow R(-11)^{35} \rightarrow R(-10)^{23} \rightarrow I_{\Delta_1}^*$\\
\vskip 0.3cm
$0 \rightarrow R(-12)^{10} \rightarrow R(-11)^{27} \rightarrow R(-10)^{18} \rightarrow I_{\Delta_2}^*$\\
\vskip 0.3cm
$0 \rightarrow R(-12)^7 \rightarrow R(-11)^8 \rightarrow I_{\Delta_1}^* \cap I_{\Delta_2}^*\text{ }$.
\end{center}
\end{ex}

\bigskip
\noindent
Davide Bolognini\\
Dipartimento di Matematica\\
Universit{\`a} di Genova\\
Via Dodecaneso 35, 16146 Genova, Italy\\
e-mail: {\tt bolognin@dima.unige.it}
\end{document}